\begin{document}
\newtheorem{theo}{Theorem}[section]
\newtheorem{atheo}{Theorem*}
\newtheorem{prop}[theo]{Proposition}
\newtheorem{aprop}[atheo]{Proposition*}
\newtheorem{lemma}[theo]{Lemma}
\newtheorem{alemma}[atheo]{Lemma*}
\newtheorem{exam}[theo]{Example}
\newtheorem{coro}[theo]{Corollary}
\theoremstyle{definition}
\newtheorem{defi}[theo]{Definition}
\newtheorem{rem}[theo]{Remark}


\newcommand{\Bb}{{\bf B}}
\newcommand{\Cb}{{\mathbb C}}
\newcommand{\Nb}{{\mathbb N}}
\newcommand{\Qb}{{\mathbb Q}}
\newcommand{\Rb}{{\mathbb R}}
\newcommand{\Zb}{{\mathbb Z}}
\newcommand{\Ac}{{\mathcal A}}
\newcommand{\Bc}{{\mathcal B}}
\newcommand{\Cc}{{\mathcal C}}
\newcommand{\Dc}{{\mathcal D}}
\newcommand{\Fc}{{\mathcal F}}
\newcommand{\Ic}{{\mathcal I}}
\newcommand{\Jc}{{\mathcal J}}
\newcommand{\Kc}{{\mathcal K}}
\newcommand{\Lc}{{\mathcal L}}
\newcommand{\Oc}{{\mathcal O}}
\newcommand{\Pc}{{\mathcal P}}
\newcommand{\Sc}{{\mathcal S}}
\newcommand{\Tc}{{\mathcal T}}
\newcommand{\Uc}{{\mathcal U}}
\newcommand{\Vc}{{\mathcal V}}

\author{Nik Weaver}

\title [A quantum Ramsey theorem]
       {A ``quantum'' Ramsey theorem for operator systems}

\address {Department of Mathematics\\
Washington University\\
Saint Louis, MO 63130}

\email {nweaver@math.wustl.edu}

\date{\em Jan.\ 13, 2016}


\begin{abstract}
Let $\mathcal{V}$ be a linear subspace of $M_n(\mathbb{C})$ which contains
the identity matrix and is stable under the formation of Hermitian adjoints.
We prove that if $n$ is sufficiently large then there exists a rank $k$
orthogonal projection $P$ such that ${\rm dim}(P\mathcal{V}P) = 1$ or $k^2$.
\end{abstract}

\maketitle

\section{Background}

An {\it operator system} in finite dimensions is a linear subspace
$\mathcal{V}$ of $M_n(\mathbb{C})$ with the properties
\begin{itemize}
\item $I_n \in \mathcal{V}$

\item $A \in \mathcal{V} \Rightarrow A^* \in \mathcal{V}$
\end{itemize}
where $I_n$ is the $n\times n$ identity matrix and $A^*$ is the Hermitian
adjoint of $A$. In this paper the scalar field
will be complex and we will write $M_n = M_n(\mathbb{C})$.

Operator systems play a role in the theory of quantum error correction.
In classical information theory, the ``confusability graph'' is a bookkeeping
device which keeps track of possible ambiguity that can result when a message
is transmitted through a noisy channel. It is defined by taking as
vertices all possible source messages, and placing an edge between two
messages if they are sufficiently similar that data corruption could lead
to them being indistinguishable on reception. Once the confusability graph
is known, one is able to overcome the problem of information loss by using an
independent subset of the confusability graph, which is known as a ``code''.
If it is agreed that only code messages will be sent, then we can be sure
that the intended message is recoverable.

When information is stored in quantum mechanical systems, the problem
of error correction changes radically. The basic theory of quantum error
correction was laid down in \cite{KL}. In \cite{DSW} it was suggested that
in this setting the role of the confusability graph is played by an operator
system, and it was shown that for every operator system a ``quantum Lov\'{a}sz
number'' could be defined, in analogy to the classical Lov\'{a}sz number of a
graph. This is an important parameter in classical information theory.
See also \cite{S} for much more along these lines.

The interpretation of operator systems as ``quantum graphs'' was also
proposed in \cite{W}, based on the more general idea of regarding linear
subspaces of $M_n$ as ``quantum relations'', and taking the conditions
$I_n \in \mathcal{V}$ and $A \in \mathcal{V} \Rightarrow A^* \in \mathcal{V}$
to respectively express reflexivity and symmetry conditions. The idea is that
the edge structure of a classical graph can be encoded in an obvious way as a
reflexive, symmetric relation on a set. This point of view was explicitly
connected to the quantum error correction literature in \cite{W2}.

Ramsey's theorem states that for any $k$ there exists $n$ such that every
graph with at least $n$ vertices contains either a $k$-clique or a
$k$-anticlique, i.e., a set of $k$ vertices among which either all edges are
present or no edges are present. Simone Severini asked the author whether
there is a ``quantum'' version of this theorem for operator systems. The
natural notions of $k$-clique and $k$-anticlique are the following.

\begin{defi}
Let $\mathcal{V} \subseteq M_n$ be an operator system. A
{\it quantum $k$-clique} of $\mathcal{V}$ is an orthogonal projection
$P \in M_n$ (i.e., a matrix satisfying $P = P^2 = P^*$) whose rank is $k$,
such that $P\mathcal{V}P = \{PAP: A \in \mathcal{V}\}$ is maximal; that is,
such that $P\mathcal{V}P = PM_nP \cong M_k$, or equivalently,
${\rm dim}(P\mathcal{V}P) = k^2$. A {\it quantum $k$-anticlique} of
$\mathcal{V}$ is a rank $k$ projection $P$ such that $P\mathcal{V}P$
is minimal; that is, such that $P\mathcal{V}P = \mathbb{C}\cdot P \cong M_1$,
or equivalently, ${\rm dim}(P\mathcal{V}P) = 1$.
\end{defi}

The definition of quantum $k$-anticlique is supported by the fact that
in quantum error correction a code is taken to be the range of a projection
satisfying just this condition, $P\mathcal{V}P = \mathbb{C}\cdot P$ \cite{KL}.
As mentioned earlier, classical codes are taken to be independent sets, which
is to say, anticliques. See also Section 4 of \cite{W2}, where intuition for
why $P\mathcal{V}P$ is correctly thought of as a ``restriction'' of
$\mathcal{V}$ is given.

The main result of this paper is a quantum Ramsey theorem which states
that for every $k$ there exists $n$ such that every operator system in
$M_n$ has either a quantum $k$-clique or a quantum $k$-anticlique. This
answers Severini's question positively. The quantum Ramsey theorem is
not merely analogous to the classical Ramsey theorem; using the bimodule
formalism of \cite{W}, we can formulate a common generalization of the
two results. This will be done in the final section of the paper.

I especially thank Michael Jury for stimulating discussions, and in
particular for conjecturing Proposition \ref{rowcolumn} and improving
Lemma \ref{normalize}.

Part of this work was done at a workshop on Zero-error information,
Operators, and Graphs at the Universitat Aut\`{o}noma de Barcelona.

\section{Examples}

If $G = (V,\mathcal{E})$ is any finite simple graph, without loss of generality
suppose $V = \{1, \ldots, n\}$ and define $\mathcal{V}_G$ to be the
operator system
$$\mathcal{V}_G = {\rm span}\{E_{ij}: i = j\mbox{ or }\{i,j\} \in \mathcal{E}\}
\subseteq M_n.$$
Here we use the notation $E_{ij}$ for the $n\times n$ matrix with a $1$ in
the $(i,j)$ entry and $0$'s elsewhere. Also, let $(e_i)$ be the standard
basis of $\mathbb{C}^n$, so that $E_{ij} = e_ie_j^*$.

The inclusion of the diagonal $E_{ii}$ matrices in $\mathcal{V}_G$ corresponds
to including a loop at each vertex in $G$. In the error correction setting
this is natural: we
place an edge between any two messages that might be indistinguishable on
reception, and this is certainly true of any message and itself. Once we
adopt the convention that every graph has a loop at each vertex, an anticlique
should no longer be a subset $S \subseteq V$ which contains no edges, it
should be a subset which contains no edges except loops. Such a set corresponds
to the projection $P_S$ onto ${\rm span}\{e_i: i \in S\}$, which has the
property
that $P_S\mathcal{V}_GP_S = {\rm span}\{E_{ii}: i \in S\}$. Or course this is
very different from a quantum anticlique where $P\mathcal{V}P$ is
one-dimensional.

To illustrate the dissimilarity between classical and quantum cliques and
anticliques, consider the {\it diagonal operator system}
$D_n \subseteq M_n$ consisting of the diagonal $n\times n$ complex
matrices. In the notation used above, this is just the operator system
$\mathcal{V}_G$ corresponding
to the empty graph on $n$ vertices. It might at first appear to falsify
the desired quantum Ramsey theorem, because of the following fact.

\begin{prop}
$D_n$ has no quantum $k$-anticlique for $k \geq 2$.
\end{prop}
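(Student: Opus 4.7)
The plan is to show that $P D_n P$ must contain a matrix of rank $1$, which immediately prevents it from equaling $\mathbb{C}\cdot P$ when $\operatorname{rank}(P) = k \geq 2$, since a nonzero scalar multiple of $P$ has rank $k$.

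First I would observe that $D_n = \operatorname{span}\{E_{ii} : 1 \leq i \leq n\}$, so $P D_n P$ is spanned by the matrices $P E_{ii} P$. Using $E_{ii} = e_i e_i^*$, each spanning element can be rewritten as $P E_{ii} P = (Pe_i)(Pe_i)^*$, which is manifestly of rank at most $1$.

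Next, because $P$ is a rank $k$ projection with $k \geq 1$, it is nonzero, so at least one of the vectors $Pe_i$ is nonzero; for such an $i$, the matrix $(Pe_i)(Pe_i)^*$ has rank exactly $1$. On the other hand every element of $\mathbb{C}\cdot P$ is either the zero matrix or has rank $k \geq 2$. Thus the nonzero rank $1$ element $PE_{ii}P$ of $PD_nP$ cannot lie in $\mathbb{C}\cdot P$, so $PD_nP \neq \mathbb{C}\cdot P$, and $P$ fails to be a quantum $k$-anticlique.

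There is essentially no obstacle; the argument is purely a rank comparison. The only thing to be careful about is ruling out the trivial case where all $Pe_i$ vanish, which is immediate since $\sum_i PE_{ii}P = P \neq 0$ forces at least one summand to be nonzero.
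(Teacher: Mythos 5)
Your proof is correct and follows essentially the same argument as the paper: some compression $PE_{ii}P$ is a nonzero rank-one matrix (using $\sum_i PE_{ii}P = P \neq 0$), while every element of $\mathbb{C}\cdot P$ has rank $0$ or $k \geq 2$. The only cosmetic difference is that you make the rank bound explicit by writing $PE_{ii}P = (Pe_i)(Pe_i)^*$, where the paper simply cites ${\rm rank}(E_{ii}) = 1$.
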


\begin{proof}
Let $P \in M_n$ be a projection of rank $k \geq 2$. Since
${\rm rank}(E_{ii}) =1$ for all $i$, it follows that
${\rm rank}(PE_{ii}P) = 0$ or $1$ for each $i$. If $PE_{ii}P = 0$ for all
$i$ then $P = \sum_{i=1}^n PE_{ii}P = 0$, contradiction. Thus we must have
${\rm rank}(PE_{ii}P) = 1$ for some $i$, but then $PE_{ii}P$ cannot belong
to $\mathbb{C}\cdot P = \{aP: a \in \mathbb{C}\}$, since every matrix in
this set has rank $0$ or $k$. So $PD_nP \neq \mathbb{C}\cdot P$.
\end{proof}

Since every operator system of the form $\mathcal{V}_G$ contains the
diagonal matrices, none of these operator systems has nontrivial quantum
anticliques. The surprising thing is that for $n$ sufficiently large, they
all have quantum $k$-cliques. This follows from the next result.

\begin{prop}\label{diagonal}
If $n \geq k^2 + k -1$ then $D_n$ has a quantum $k$-clique.
\end{prop}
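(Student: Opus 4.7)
The plan is to recast the problem as one about frames in $\mathbb{C}^k$. Any rank-$k$ projection $P \in M_n$ can be written $P = VV^*$ for some isometry $V \colon \mathbb{C}^k \to \mathbb{C}^n$ (so $V^*V = I_k$), and setting $w_i := V^* e_i \in \mathbb{C}^k$ gives $V^* E_{ii} V = w_i w_i^*$. Under the canonical $*$-isomorphism $PM_nP \to M_k$, $A \mapsto V^*AV$, the subspace $PD_nP$ corresponds to $\operatorname{span}\{w_i w_i^* : 1 \le i \le n\} \subseteq M_k$, while the isometry condition $V^*V = I_k$ is precisely the Parseval relation $\sum_i w_i w_i^* = I_k$. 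So it suffices to exhibit, for $n \ge k^2 + k - 1$, a family of $n$ vectors $w_1, \dots, w_n \in \mathbb{C}^k$ satisfying (a) $\sum_i w_i w_i^* = I_k$ and (b) $\operatorname{span}\{w_i w_i^*\} = M_k$.

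For the construction I would first choose $k^2$ auxiliary vectors $\tilde w_1, \dots, \tilde w_{k^2} \in \mathbb{C}^k$ whose outer products already span $M_k$, with no regard to the Parseval condition. An explicit choice is the $k$ standard basis vectors $e_j$ together with the $2\binom{k}{2}$ vectors $e_i + e_j$ and $e_i + ie_j$ for $i < j$: this gives $k + k(k-1) = k^2$ rank-one Hermitians, and a direct computation (subtracting the diagonal contributions from $(e_i+e_j)(e_i+e_j)^*$ and $(e_i+ie_j)(e_i+ie_j)^*$) recovers every $E_{ij}$ in their span. Set $S := \sum_{j=1}^{k^2} \tilde w_j \tilde w_j^*$ and let $\lambda$ be the largest eigenvalue of $S$. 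The key observation is that $\lambda I_k - S$ is positive semidefinite and annihilates at least one eigenvector of $S$, so it has rank at most $k - 1$. Its spectral decomposition then produces vectors $u_1, \dots, u_{k-1} \in \mathbb{C}^k$ (some possibly zero) with $\lambda I_k - S = \sum_{j=1}^{k-1} u_j u_j^*$. Appending these as $\tilde w_{k^2+j} := u_j$ and rescaling $w_i := \tilde w_i / \sqrt{\lambda}$ gives $k^2 + k - 1$ vectors satisfying (a) and (b); for $n > k^2 + k - 1$, pad the list with zero vectors and thus rows of $V$ which are zero.

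The only substantive step is the completion: I need just enough extra rank-one terms to turn $S$ into a scalar multiple of $I_k$ without destroying the spanning property, and the counting has to fit under $n = k^2 + k - 1$. Both conditions are automatic from the setup. Spanning survives because it was already achieved by the initial $k^2$ vectors alone; and the number of completion vectors is capped at $k - 1$ precisely because taking $\lambda$ to be the operator norm of $S$ forces $\lambda I_k - S$ to be rank-deficient. This tight accounting is exactly where the bound $n \ge k^2 + (k - 1)$ enters.
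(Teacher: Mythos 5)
Your proof is correct, and it takes a route that is dual to the paper's rather than identical to it. The paper starts from the same $k^2$ vectors with spanning outer products, but then works on the ``column'' side: it appends tails in $k^2-1$ auxiliary coordinates to make the vectors pairwise orthogonal in $\mathbb{C}^{k^2+k-1}$, takes $D_n$ relative to an orthonormal basis containing the normalized results (implicitly invoking unitary equivalence of diagonal operator systems), and compresses to the first $k$ coordinates. You instead work on the ``row'' side of the isometry: keep the standard basis fixed, vary the projection $P = VV^*$, and phrase everything as finding a Parseval frame $\{w_i\}$ of $\mathbb{C}^k$ whose outer products span $M_k$; the completion step adds at most $k-1$ rank-one terms via the spectral decomposition of $\lambda I_k - S$, where $\lambda = \|S\|$. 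The two dimension counts, $k+(k^2-1)$ for the paper and $k^2+(k-1)$ for you, happen to give the same bound $n \ge k^2+k-1$. Your completion trick is essentially the frame-operator counterpart of the paper's Lemma \ref{normalize} (there stated for the Gramian: $\|G\|I_r - G$ has rank at most $r-1$), which the paper only introduces later in order to strengthen this proposition to Lemma \ref{blocks}; so in effect you have pulled that cleaner mechanism forward and used it directly, avoiding the sequential tail bookkeeping and the change of basis. What the paper's more hands-on construction buys is that it adapts to the situation of Lemma \ref{blocks}, where the compressed matrices are prescribed in advance rather than freely chosen rank-one projections; your argument as stated is specific to $D_n$, which is all this proposition requires.
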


\begin{proof}
Without loss of generality let $n = k^2 + k - 1$. Start by considering $M_k$
acting on $\mathbb{C}^k$. Find $k^2$ vectors $v_1, \ldots, v_{k^2}$ in
$\mathbb{C}^k$ such that the rank 1 matrices $v_iv_i^*$ are linearly
independent. (For example, we could take the $k$ standard basis vectors
$e_i$ plus the $\frac{k^2 - k}{2}$ vectors $e_i + e_j$ for $i \neq j$ plus
the $\frac{k^2 - k}{2}$ vectors $e_i + ie_j$ for $i \neq j$. The corresponding
rank 1 matrices span $M_k$ and thus they must be independent since
${\rm dim}(M_k) = k^2$.) Making the identification $\mathbb{C}^n \cong
\mathbb{C}^k \oplus \mathbb{C}^{k^2 - 1}$, we can extend the $v_i$ to
orthogonal vectors $w_i \in
\mathbb{C}^n$ as follows: take $w_1 = v_1 \oplus (1, 0, \ldots, 0)$,
$w_2 = v_2 \oplus (a_1, 1, 0, \ldots, 0)$, $w_3 = v_3 \oplus (b_1, b_2, 1,
0, \ldots, 0)$, etc., with $a_1, b_1, b_2, \ldots$ successively chosen so
that $\langle w_i, w_j\rangle = 0$ for $i \neq j$. We need $k^2 - 1$ extra
dimensions to accomplish this. Now let $P$ be the rank $k$
projection of $\mathbb{C}^n$ onto $\mathbb{C}^k$ and let $D_n$ be the
diagonal operator system relative to any orthonormal basis of $\mathbb{C}^n$
that contains the vectors $\frac{w_i}{\|w_i\|}$ for $1 \leq i \leq k^2$. Then
$PD_nP$ contains $Pw_iw_i^*P = v_iv_i^*$ for all $i$, so
${\rm dim}(PD_nP) = k^2$.
\end{proof}

A stronger version of this result will be proven in Lemma \ref{blocks}.
The value $n = k^2 + k - 1$ may not be optimal, but note that in order
for $D_n$ to have a quantum $k$-clique $n$ must be at least $k^2$, since
${\rm dim}(D_n) = n$ and we need ${\rm dim}(PD_nP) = k^2$.

Next, we show that operator systems of arbitrarily large dimension may
lack quantum 3-cliques.

\begin{prop}\label{rowcolumn}
Let $\mathcal{V}_n = {\rm span}\{I_n, E_{11}, E_{12}, \ldots, E_{1n},
E_{21}, \ldots, E_{n1}\} \subseteq M_n$. Then $\mathcal{V}_n$
has no quantum 3-cliques.
\end{prop}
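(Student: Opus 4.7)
The plan is to use the explicit parametrization of $\mathcal{V}_n$ together with the observation that $\mathcal{V}_n$ is self-similar under compressions, so that compressing by any rank $3$ projection yields (an isomorphic copy of) a subspace of $\mathcal{V}_3 \subseteq M_3$, which is too small to be a quantum $3$-clique.

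First I would observe that every element of $\mathcal{V}_n$ can be written as
$$A = \alpha I_n + e_1 x^* + y e_1^*$$
for some $\alpha \in \mathbb{C}$ and $x,y \in \mathbb{C}^n$; this is immediate from the list of spanning matrices. A one-line calculation then gives
$$PAP = \alpha P + u (Px)^* + (Py) u^*, \qquad u := Pe_1,$$
and since $a := Px$ and $b := Py$ range freely over $W := \mathrm{range}(P)$ as $x,y$ range over $\mathbb{C}^n$, we obtain
$$P\mathcal{V}_n P = \{\alpha P + u a^* + b u^* : \alpha \in \mathbb{C},\ a,b \in W\}.$$

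I would then split into cases. If $u = 0$, the compressed space is just $\mathbb{C}\cdot P$, of dimension $1$. Otherwise, I would choose an orthonormal basis of $W$ whose first vector is $u/\|u\|$; under the resulting identification $W \cong \mathbb{C}^3$ the vector $u$ becomes a scalar multiple of $e_1$, and $P\mathcal{V}_n P$ is transported exactly to $\mathcal{V}_3 \subseteq M_3$. Since $\mathcal{V}_3$ is spanned by $I_3, E_{11}, E_{12}, E_{13}, E_{21}, E_{31}$, we get $\dim P\mathcal{V}_n P \leq \dim \mathcal{V}_3 = 6 < 9$, so $P$ cannot be a quantum $3$-clique.

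There is no real obstacle here; the entire content of the proof is the self-similarity observation that the only distinguished piece of data in $\mathcal{V}_n$ (apart from the identity) is the single vector $e_1$, which under compression is simply replaced by $Pe_1 \in W$. The rest is a dimension count.
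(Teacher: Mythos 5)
Your proof is correct and follows essentially the same route as the paper: both arguments show that compressing $\mathcal{V}_n$ by a rank $3$ projection $P$ either kills everything but $\mathbb{C}\cdot P$ (when $Pe_1 = 0$) or reproduces a copy of $\mathcal{V}_3$ relative to a basis of ${\rm ran}(P)$ starting with $Pe_1/\|Pe_1\|$, and then concludes by the dimension count $6 < 9$. The only cosmetic difference is that you parametrize elements as $\alpha I_n + e_1x^* + ye_1^*$ rather than tracking the spans of the individual compressions $PE_{1i}P$ and $PE_{i1}P$ as the paper does.
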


\begin{proof}
Let $P \in M_n$ be any projection. If $Pe_1 = 0$ then $PE_{1i}P = PE_{i1}P = 0$
for all $i$, so $P$ is a quantum
anticlique. Otherwise let $k = {\rm rank}(P)$ and let
$f_1, \ldots, f_k$ be an orthonormal basis of ${\rm ran}(P)$ with
$f_1 = \frac{Pe_1}{\|Pe_1\|}$. Then $PE_{1i}P = Pe_1e_i^*P = f_1v_i^*$ where
$v_i = \|Pe_1\|Pe_i$. The span of these matrices $f_1v_i^*$ is precisely
${\rm span}\{f_1f_i^*\}$, since the projections of the $e_i$
span ${\rm ran}(P)$. Similarly, the span of the matrices $PE_{i1}P$ is
precisely ${\rm span}\{f_if_1^*\}$. So $P\mathcal{V}_nP$ is just
$\mathcal{V}_k \subseteq M_k \cong PM_nP$, relative to the $(f_i)$ basis.
If $k \geq 3$ then ${\rm dim}(\mathcal{V}_k) = 2k < k^2$, so $P$ cannot be
a quantum clique.
\end{proof}

\section{Quantum 2-cliques}

In contrast to Proposition \ref{rowcolumn}, we will show in this section
that any operator system whose dimension is at least four must have a
quantum 2-clique. This result is clearly sharp. It is somewhat analogous
to the trivial classical fact that any graph that contains at least one
edge must have a 2-clique.

Define the Hilbert-Schmidt inner product of $A, B \in M_n$ to be
${\rm Tr}(AB^*)$. Denote the set of Hermitian $n\times n$ matrices by $M_n^h$.
Observe that any operator system is spanned by its Hermitian part since any
matrix $A$ satisfies $A = {\rm Re}(A) + i{\rm Im}(A)$ where
${\rm Re}(A) = \frac{1}{2}(A + A^*)$ and ${\rm Im}(A) = \frac{1}{2i}(A - A^*)$.

\begin{lemma}\label{rank2}
Let $\mathcal{V} \subseteq M_n$ be an operator system and suppose
${\rm dim}(\mathcal{V}) \leq 3$. Then its Hilbert-Schmidt orthocomplement
is spanned by rank 2 Hermitian matrices.
\end{lemma}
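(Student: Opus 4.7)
Since $\mathcal{V}^\perp$ is closed under the adjoint operation (because $\mathcal{V}$ is), it is spanned by its Hermitian part $\mathcal{W} = (\mathcal{V}^\perp)^h$, so it suffices to show that $\mathcal{W}$ is spanned as a real vector space by the rank-$2$ Hermitian matrices it contains. Because $I \in \mathcal{V}$, every element of $\mathcal{W}$ is traceless; since a nonzero rank-$1$ Hermitian $\lambda uu^*$ has nonzero trace, $\mathcal{V}^\perp$ contains no nonzero rank-$1$ Hermitian matrix. In particular, any nonzero Hermitian element of $\mathcal{V}^\perp$ of rank at most $2$ has rank exactly $2$.

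I would argue by induction on the rank $r$ of $H \in \mathcal{W}$. The base case $r \leq 2$ is immediate (either $H = 0$ or $H$ itself is a rank-$2$ Hermitian in $\mathcal{V}^\perp$). For $r \geq 3$, the plan is to find a rank-$2$ Hermitian $K \in \mathcal{V}^\perp$ with $\operatorname{rank}(H-K) < r$; then $H - K \in \mathcal{W}$ has smaller rank, and induction produces $H = K + (H-K)$ as a real combination of rank-$2$ Hermitians in $\mathcal{V}^\perp$. Write $\mathcal{V} = \operatorname{span}\{I, A, B\}$ with $A, B$ Hermitian (trivially if $\dim\mathcal{V} < 3$). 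Choose three unit eigenvectors $u_1, u_2, u_3$ of $H$ with nonzero eigenvalues, spanning a three-dimensional $H$-invariant subspace $R \subseteq \operatorname{range}(H)$ on which $H$ is nonsingular. Restricting attention to $K$ supported on $R$, we have $H - K = (H|_R - K) \oplus H|_{R^\perp}$, so rank reduction below $r$ is equivalent to the single cubic condition $\det(H|_R - K) = 0$ on the three-dimensional space $R$. Inside the nine-real-dimensional space $M_R^h$, the linear conditions $K \perp I, A, B$ cut out at least a six-dimensional subspace, and the two codimension-one polynomial conditions $\det K = 0$ (rank $\leq 2$) and $\det(H|_R - K) = 0$ leave a real algebraic set of expected dimension at least four. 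Nonsingularity of $H|_R$ excludes $K = 0$, so any nonzero element in the set is a rank-$2$ Hermitian in $\mathcal{V}^\perp$ with $\operatorname{rank}(H - K) \leq 2 + (r - 3) = r - 1$.

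The main obstacle is verifying rigorously that the real algebraic set above is actually nonempty, rather than merely positive-dimensional in the complexified sense. I would reinforce the dimension count with an explicit ansatz: take $K = \lambda_1(u_1 u_1^* - vv^*)$, where $\lambda_1$ is the eigenvalue of $H$ attached to $u_1$ and $v$ is a unit vector in $R \cap u_1^\perp$ satisfying the two real quadratic equalities $v^*Av = u_1^*Au_1$ and $v^*Bv = u_1^*Bu_1$. These equations cut a generically one-dimensional locus out of the three-real-dimensional unit sphere in $R \cap u_1^\perp \cong \mathbb{C}^2$, and the freedom to permute the roles of the three chosen eigenvectors (and to rotate within repeated eigenspaces of $H$, if any) should let one avoid the degenerate configurations in which eigenvalue interlacing would otherwise force the required target values outside the numerical range of $A$ or $B$ on $R \cap u_1^\perp$.
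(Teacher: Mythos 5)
Your reductions are fine (passing to the Hermitian part, tracelessness, the absence of rank-one elements, hence ``rank $\leq 2$ and nonzero implies rank exactly $2$''), and induction on ${\rm rank}(H)$ by subtracting rank-2 elements of $\mathcal{V}^\perp$ is a reasonable architecture. But the one step that carries all the content --- the existence, for each $H \in \mathcal{V}^\perp$ of rank $\geq 3$, of a rank-2 Hermitian $K \in \mathcal{V}^\perp$ with ${\rm rank}(H-K) < {\rm rank}(H)$ --- is exactly what you do not prove, and you say so yourself. The ``expected dimension'' count is not an argument over $\mathbb{R}$: real hypersurfaces cut out by determinants need not meet a real subspace in the expected dimension, and nonemptiness of the nonzero locus is not guaranteed by counting codimensions. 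Your explicit ansatz $K = \lambda_1(u_1u_1^* - vv^*)$ does make the rank drop automatic, but it requires a unit vector $v$ in the two-dimensional space ${\rm span}\{u_2,u_3\}$ with $(\langle Av,v\rangle, \langle Bv,v\rangle) = (\langle Au_1,u_1\rangle, \langle Bu_1,u_1\rangle)$, i.e.\ a prescribed point of the joint numerical range of two compressed forms on a $2$-dimensional space. Permuting which eigenvector is special can always rescue a \emph{single} form (failure at $u_i$ forces $\langle Au_i,u_i\rangle$ to lie strictly outside the interval containing $\langle Au_j,u_j\rangle$ and $\langle Au_k,u_k\rangle$, so $u_i$ must realize the strict max or strict min, which at most two indices can do), but the \emph{simultaneous} condition for the pair $(A,B)$ is another matter: the joint numerical range on a $2$-dimensional space is a possibly very thin ellipse, and the target lying in the coordinate box does not put it in the ellipse. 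Nothing in your sketch rules out that every admissible choice of special eigenvector (and rotation within degenerate eigenspaces) lands in such a degenerate configuration, so the induction step remains unproved.

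It is worth noting how the paper gets around precisely this point, by a different route: instead of fixing one element $H$ and trying to peel off rank-2 pieces, it argues by duality. If the rank-2 Hermitian elements of $\mathcal{V}_0^\perp$ did not span it, there would be a Hermitian $B \in \mathcal{V}_0^\perp$ orthogonal to all of them; writing $B = B^+ - B^-$ with $\alpha = {\rm Tr}(B^+) = {\rm Tr}(B^-)$, the Au-Yeung--Poon theorem on the convexity of the joint numerical range of \emph{three} Hermitian matrices produces unit vectors $v, w$ ranging over all of $\mathbb{C}^n$ (not confined to a $2$-dimensional subspace) with $\frac{1}{\alpha}{\rm Tr}(AB^\pm) = \langle Av,v\rangle$, resp.\ $\langle Aw,w\rangle$, simultaneously for $A = A_1, A_2, B$; then $C = \alpha(vv^* - ww^*)$ is a rank-2 element of $\mathcal{V}_0^\perp$ not orthogonal to $B$, a contradiction. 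That convexity theorem is the substantive input that does the simultaneous-attainment work your ansatz needs; without it, or some equivalent, your proof has a genuine gap at its central step.
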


\begin{proof}
Work in $M_n^h$. Let $\mathcal{V}_0 = \mathcal{V} \cap M_n^h$ and let
$\mathcal{W}_0$ be the real span of the Hermitian matrices in
$\mathcal{V}_0^\perp$ whose rank is 2. We will show that
$\mathcal{W}_0 = \mathcal{V}_0^\perp$ (in $M_n^h$); taking complex spans
then yields the desired result.

Suppose to the contrary that there exists a nonzero Hermitian matrix
$B \in \mathcal{V}_0^\perp$ which is orthogonal to $\mathcal{W}_0$.
Say $\mathcal{V}_0 = {\rm span}\{I_n, A_1, A_2\}$, where $A_1$ and $A_2$
are not necessarily distinct from $I_n$. Since $B \in \mathcal{V}_0^\perp$,
we have ${\rm Tr}(I_nB) = {\rm Tr}(A_1B) = {\rm Tr}(A_2B) = 0$, but
${\rm Tr}(B^2) \neq 0$. We will show that there is a rank 2 Hermitian
matrix $C$ whose inner products against $I_n$, $A_1$, $A_2$, and $B$ are
the same as their inner products against $B$. This will be a matrix
in $\mathcal{W}_0$ which is not orthogonal to $B$, a contradiction.

Since $B$ is Hermitian, we can choose an orthonormal basis $(f_i)$ of
$\mathbb{C}^n$ with
respect to which it is diagonal, say $B = {\rm diag}(b_1, \ldots, b_n)$.
We may assume $b_1, \ldots, b_j \geq 0$ and $b_{j+1}, \ldots, b_n < 0$.
Let $B^+ = {\rm diag}(b_1, \ldots, b_j, 0, \ldots, 0)$ and
$B^- = {\rm diag}(0, \ldots, 0, -b_{j+1}, \ldots, -b_n)$ be the positive
and negative parts of $B$, so that $B = B^+ - B^-$. Let
$\alpha = {\rm Tr}(B^+) = {\rm Tr}(B^-)$ (they are equal since
${\rm Tr}(B) = {\rm Tr}(I_nB) = 0$).
Then $\frac{1}{\alpha}B^+$ is a convex combination of the rank 1 matrices
$f_1f_1^*$, $\ldots$, $f_jf_j^*$; that is, the linear functional
$A \mapsto \frac{1}{\alpha}{\rm Tr}(AB^+)$ is a convex combination
of the linear functionals $A \mapsto \langle Af_i, f_i\rangle$ for
$1 \leq i \leq j$. By the convexity of the joint numerical range of three
Hermitian matrices \cite{AYP}, there exists a unit vector $v \in \mathbb{C}^n$
such that $\frac{1}{\alpha}{\rm Tr}(AB^+) = \langle Av, v\rangle$ for
$A = A_1$, $A_2$, and $B$. Similarly, there exists a unit vector $w$
such that $\frac{1}{\alpha}{\rm Tr}(AB^-) = \langle Aw,w\rangle$
for $A = A_1$, $A_2$, and $B$. Then $C = \alpha(vv^* - ww^*)$ is a rank 2
Hermitian matrix whose inner products against $I_n$, $A_1$, $A_2$, and $B$
are the same as their inner products against $B$. So $C$ has
the desired properties.
\end{proof}

\begin{lemma}\label{threedim}
Let $\mathcal{V} \subseteq M_3$ be an operator system and suppose
${\rm dim}(\mathcal{V}) = 4$. Then $\mathcal{V}$ has a quantum 2-clique.
\end{lemma}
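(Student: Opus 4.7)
The plan is to argue by contradiction via a dimension count on an incidence variety. Suppose $\mathcal{V}$ has no quantum 2-clique. Then for every rank-$2$ orthogonal projection $P \in M_3$ with range $V \subseteq \mathbb{C}^3$, the compression map $\mathcal{V} \to PM_3P$, $A \mapsto PAP$, has nontrivial kernel; since this kernel is closed under adjoints, it contains a nonzero Hermitian $A \in \mathcal{V}_0 = \mathcal{V} \cap M_3^h$ with $A|_V = 0$. Writing $A$ in block form with respect to $\mathbb{C}^3 = V \oplus V^\perp$ forces it into the form $v_3 y^* + y v_3^*$ for some $y \in \mathbb{C}^3$, where $v_3$ is a unit vector spanning $V^\perp$; in particular ${\rm rank}(A) \leq 2$ and $v_3 \in {\rm ran}(A)$.

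Parametrize $V$ by $[v_3] \in \mathbb{CP}^2$ and set
\[
S_{[v_3]} = \{v_3 y^* + y v_3^* : y \in \mathbb{C}^3\} = \{B \in M_3^h : B|_V = 0\} \subseteq M_3^h,
\]
a real $5$-dimensional subspace. The assumption becomes: $\mathcal{V}_0 \cap S_{[v_3]} \neq 0$ for every $[v_3] \in \mathbb{CP}^2$. I will derive a contradiction by bounding the real dimension of the incidence variety
\[
I = \{([v_3], [A]) \in \mathbb{CP}^2 \times \mathbb{P}(\mathcal{V}_0) : A \in S_{[v_3]} \setminus \{0\}\},
\]
where $\mathbb{P}(\mathcal{V}_0) \cong \mathbb{RP}^3$ is the real projectivization of the $4$-dimensional real vector space $\mathcal{V}_0$. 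By the assumption, the first projection $I \to \mathbb{CP}^2$ is surjective, hence ${\rm dim}_\mathbb{R} I \geq 4$.

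For the upper bound, I use the second projection $I \to \mathbb{P}(\mathcal{V}_0)$. The image lies in the rank-$\leq 2$ Hermitian locus of $\mathcal{V}_0$, cut out by the cubic equation $\det A = 0$; since $\det I = 1 \neq 0$, this is a proper real subvariety of $\mathbb{P}(\mathcal{V}_0)$ of dimension at most $2$. The fiber over a rank-$2$ signature-$(1,1)$ element $A$ consists of $[v_3] \in \mathbb{P}({\rm ran}(A)) \cong \mathbb{CP}^1$ satisfying a single real equation: if $u_1, u_2$ diagonalize $A|_{{\rm ran}(A)}$ with eigenvalues $\lambda > 0 > -\mu$, writing $v_3 = au_1 + bu_2$ and solving $A = v_3 y^* + y v_3^*$ directly shows this is solvable in $y$ exactly when $|a|^2 \mu = |b|^2 \lambda$, a $1$-dimensional real curve in $\mathbb{CP}^1$. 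Fibers over rank-$1$ Hermitians are single points (forcing $v_3$ parallel to the range), and fibers over rank-$2$ Hermitians of definite signature $(2,0)$ or $(0,2)$ are empty. Therefore ${\rm dim}_\mathbb{R} I \leq 2 + 1 = 3 < 4$, giving the desired contradiction.

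The main obstacle is the fiber-dimension computation over rank-$2$ signature-$(1,1)$ elements; this depends on the explicit eigen-analysis above to verify that each such fiber is indeed only $1$-dimensional. The remaining ingredients---the dimension of the rank-$\leq 2$ locus, and triviality of fibers over rank-$1$ and definite-signature matrices---follow from straightforward linear algebra and a count on the Grassmannian $G(2, 3) \cong \mathbb{CP}^2$.
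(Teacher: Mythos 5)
Your proof is correct, but it takes a genuinely different route from the paper's. The paper argues computationally: it normalizes a Hermitian spanning set ($A_0=I$, $A_1={\rm diag}(0,1,a)$, and $A_2,A_3$ with vanishing $(1,1)$ and $(2,2)$ entries), takes $v=(1,\alpha,0)$ and $w=(1,0,\beta)$, and shows that the relevant $4\times 4$ determinant, as a polynomial in $\alpha,\bar\alpha,\beta,\bar\beta$, can vanish identically only if $A_2$ and $A_3$ are proportional, with the ``tedious but straightforward'' verification omitted. You instead run a dimension count on the incidence correspondence $I\subseteq \mathbb{CP}^2\times\mathbb{P}(\mathcal{V}_0)$, and the pointwise ingredients all check out: the kernel of $A\mapsto PAP$ is $*$-closed, so it contains a nonzero Hermitian element; the Hermitians annihilated by compression to $v_3^\perp$ form exactly the $5$-real-dimensional space $\{v_3y^*+yv_3^*\}$, each nonzero member having rank at most $2$, range containing $v_3$, and (if rank $2$) signature $(1,1)$, since $\langle Az,z\rangle=0$ on the two-dimensional space $v_3^\perp$ rules out semidefiniteness; and over a signature-$(1,1)$ class the fiber is the circle $\mu|a|^2=\lambda|b|^2$ in $\mathbb{P}({\rm ran}\,A)\cong\mathbb{CP}^1$, as one sees from $PAP=0\Leftrightarrow\langle Az_0,z_0\rangle=0$ with $z_0=\bar b u_1-\bar a u_2$. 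The one thing to make explicit is the dimension theory itself: $I$ is a compact real algebraic (hence semialgebraic) set, so surjectivity of the first projection gives $\dim_{\mathbb R}I\geq 4$, while the bound $\dim_{\mathbb R}I\leq\dim(\mathrm{image})+\max(\mathrm{fiber})\leq 2+1$ uses the semialgebraic fiber-dimension theorem (e.g.\ via Hardt triviality); both facts are standard but should be cited rather than asserted. In comparison, the paper's argument is elementary and self-contained in principle but hides an unilluminating computation, whereas yours trades that computation for soft real-algebraic machinery and explains conceptually why dimension $4$ is the right threshold: the rank-$\leq 2$ Hermitian directions in $\mathcal{V}$ form at most a $2$-parameter projective family, which together with $1$-dimensional fibers cannot cover the $4$-dimensional family of rank-$2$ compressions.
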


\begin{proof}
The proof is computational. Say $\mathcal{V} = {\rm span}\{A_0,A_1,A_2,A_3\}$
where $A_0 = I_3$ and the other $A_i$ are Hermitian. It will suffice to find
two vectors $v,w \in \mathbb{C}^3$ such that the four vectors
$[\langle A_iv,v\rangle, \langle A_iv,w\rangle, \langle A_iw,v\rangle,
\langle A_iw,w\rangle] \in \mathbb{C}^4$ for $0 \leq i \leq 3$ are independent.
That is, we need the $4 \times 4$ matrix whose rows are these vectors to
have nonzero determinant. Then letting $P$ be the orthogonal projection onto
${\rm span}\{v, w\}$ will verify the lemma.

We can simplify by putting the $A_i$ in a special form. First, by choosing
a basis of eigenvectors, we can assume $A_1$ is diagonal. By subtracting a
suitable multiple of $A_0$ from $A_1$, multiplying by a nonzero scalar, and
possibly reordering the basis vectors, we can arrange that $A_1$ has the
form ${\rm diag}(0, 1, a)$. (Note that ${\rm dim}(\mathcal{V}) = 4$ implies
that $A_1$ cannot be a scalar multiple of $A_0$.) These operations do not
affect ${\rm span}\{A_0,A_1,A_2,A_3\}$. Then, by subtracting suitable linear
combinations of $A_0$ and $A_1$, we can arrange that $A_2$ and $A_3$ have
the forms
$$\left[\begin{matrix}
0&a_{12}&a_{13}\cr
\bar{a}_{12}&0&a_{23}\cr
\bar{a}_{13}&\bar{a}_{23}&a_{33}
\end{matrix}\right]\qquad{\rm and}\qquad
\left[\begin{matrix}
0&b_{12}&b_{13}\cr
\bar{b}_{12}&0&b_{23}\cr
\bar{b}_{13}&\bar{b}_{23}&b_{33}
\end{matrix}\right].$$
Let
$$v = \left[\begin{matrix}
1\cr
\alpha\cr
0\end{matrix}\right]\qquad{\rm and}\qquad
w = \left[\begin{matrix}
1\cr
0\cr
\beta\end{matrix}\right],$$
then evaluate the determinant of the $4\times 4$ matrix described above
and expand it as a polynomial in $\alpha$, $\beta$, $\bar{\alpha}$, and
$\bar{\beta}$. We just need this determinant to be nonzero for some
values of $\alpha$ and $\beta$; if this fails, then the polynomial
coefficients must all be zero, and direct computation shows that this
forces one of $A_2$ and $A_3$ to be a scalar multiple of the other. We omit
the tedious but straightforward details.
\end{proof}

\begin{theo}\label{2clique}
Let $\mathcal{V} \subseteq M_n$ be an operator system and suppose
${\rm dim}(\mathcal{V}) \geq 4$. Then $\mathcal{V}$ has a quantum 2-clique.
\end{theo}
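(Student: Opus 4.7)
My plan is to reduce to Lemma~\ref{threedim} by exhibiting a rank 3 projection $Q \in M_n$ with ${\rm dim}(Q\mathcal{V}Q) = 4$. Once such $Q$ is in hand, applying Lemma~\ref{threedim} to any 4-dimensional sub-operator-system of $Q\mathcal{V}Q$ inside $QM_nQ \cong M_3$ produces a rank 2 projection $P \leq Q$ with ${\rm dim}(P\mathcal{V}P) = {\rm dim}(P(Q\mathcal{V}Q)P) = 4$, which is a quantum 2-clique of $\mathcal{V}$.

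I may assume ${\rm dim}(\mathcal{V})=4$, since any quantum 2-clique of a sub-operator-system is one for $\mathcal{V}$; write $\mathcal{V}={\rm span}\{I_n, A_1, A_2, A_3\}$ with each $A_i$ Hermitian. To produce $Q$, apply Lemma~\ref{rank2} to the 3-dimensional operator system $\mathcal{V}'={\rm span}\{I_n, A_1, A_2\}$: the Hilbert--Schmidt orthocomplement of $\mathcal{V}'$ in $M_n^h$ is spanned by rank 2 Hermitians. Subtracting from $A_3$ its component in $\mathcal{V}'$, I may assume $A_3 \in (\mathcal{V}')^\perp \cap M_n^h$ is nonzero, and then some rank 2 Hermitian $B \in (\mathcal{V}')^\perp$ must have ${\rm Tr}(A_3 B) \neq 0$ (otherwise $A_3$ would be Hilbert--Schmidt orthogonal to a spanning set of $(\mathcal{V}')^\perp$, hence to itself). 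Let $S={\rm ran}(B)$ (a 2-dimensional subspace) and take $Q = P_S + uu^*$ for a unit vector $u \perp S$ to be chosen below. Since $QBQ = B$, pairing a relation $\alpha Q + \beta QA_1Q + \gamma QA_2Q + \delta QA_3Q = 0$ against $B$ in Hilbert--Schmidt inner product yields $\delta\,{\rm Tr}(A_3 B) = 0$ (the other three pairings vanish because $B \in (\mathcal{V}')^\perp$), forcing $\delta=0$.

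It therefore remains to choose $u$ so that $Q, QA_1Q, QA_2Q$ are linearly independent in $QM_nQ$, and this is the main obstacle. Such a dependence would produce a nonzero Hermitian $X = \alpha I_n + \beta A_1 + \gamma A_2$ (with $\alpha, \beta, \gamma \in \mathbb{R}$) satisfying $QXQ = 0$, meaning $X$ sends ${\rm ran}(Q)$ into ${\rm ran}(Q)^\perp$. The real subspace $W \subseteq {\rm span}_\mathbb{R}\{I_n, A_1, A_2\}$ of Hermitians with $P_S X P_S = 0$ has real dimension at most 2, because $P_S I_n P_S = P_S \neq 0$ shows the map $X \mapsto P_S X P_S$ is nonzero on the 3-dimensional real source. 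For each nonzero $X \in W$, the further condition $QXQ = 0$ reduces to the five real equations $Xu \perp S$ and $u^* X u = 0$ on $u$. Parameterizing $X$ modulo scaling by $\mathbb{P}(W)$ of real dimension at most 1, a dimension count shows the bad $u$'s form a subset of the unit sphere in $S^\perp$ (real dimension $2n-5$) of real dimension at most $2n-9$, hence a proper subset whenever $n \geq 3$. Any $u$ outside this bad set yields a rank 3 projection $Q$ with ${\rm dim}(Q\mathcal{V}Q) = 4$, and Lemma~\ref{threedim} completes the argument. (The case $n = 2$ is trivial since then ${\rm dim}(\mathcal{V}) = 4$ forces $\mathcal{V} = M_2$ and $P = I_2$ works.)
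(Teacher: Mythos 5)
The first half of your argument is fine and is close in spirit to the paper: pairing a putative relation against the rank~2 matrix $B$ to force $\delta=0$ is exactly how the paper uses Lemma~\ref{rank2}. The gap is in the step where you choose $u$ so that $Q$, $QA_1Q$, $QA_2Q$ are independent. Your dimension count assumes that for each nonzero $X\in W$ the five real conditions ($Xu\perp S$ and $u^*Xu=0$) cut the unit sphere of $S^\perp$ down by codimension $5$, but they need not: if $P_SXP_{S^\perp}=0$ (i.e.\ $X$ is supported on $S^\perp$, which is compatible with $X\in W$), the four conditions $Xu\perp S$ are vacuous. Worse, $W$ can consist entirely of such matrices, and then for \emph{every} unit $u\in S^\perp$ the real linear functional $X\mapsto u^*Xu$ on the ($\le 2$-dimensional) space $W$ has a nonzero kernel element, so $Q,QA_1Q,QA_2Q$ are dependent for every admissible $u$ and the construction never produces the desired rank~$3$ projection.

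This actually happens: take $\mathcal{V}={\rm span}\{I_n,E_{11},E_{33},E_{44}\}\subseteq M_n$ ($n\ge 4$), with $A_1=E_{33}$, $A_2=E_{44}$, $A_3=E_{11}$. After subtracting from $A_3$ its component in $\mathcal{V}'$, the matrix $B=E_{11}-E_{22}$ is a legitimate choice (it is rank~$2$, Hermitian, orthogonal to $I_n,E_{33},E_{44}$, and ${\rm Tr}(A_3B)\neq 0$), so $S={\rm span}\{e_1,e_2\}$. Then $W={\rm span}_{\mathbb R}\{E_{33},E_{44}\}$, both supported on $S^\perp$, and indeed for $Q=P_S+uu^*$ with $u\perp S$ one has $QE_{33}Q=|u_3|^2uu^*$ and $QE_{44}Q=|u_4|^2uu^*$, so ${\rm dim}(Q\mathcal{V}Q)\le 3$ for every $u$; your ``bad set'' is the whole sphere even though your count predicts dimension $2n-9$. (The theorem is of course still true for this $\mathcal{V}$, but via a different projection.) This is precisely the difficulty the paper's proof confronts head on: it first secures, by an eigenvector argument split into the jointly-diagonalizable and non-jointly-diagonalizable cases, a projection of rank at most $3$ on which $I_n,A_1,A_2$ compress independently, passes to a rank~$2$ projection $Q$ via Lemma~\ref{threedim}, and only then uses the rank~$2$ matrix $C=vv^*-ww^*$ from Lemma~\ref{rank2}, enlarging ${\rm ran}(Q)$ by the single vector $v$ to break the one possible dependence of $QA_3Q$ --- rather than fixing $S={\rm ran}(B)$ at the outset and hoping a generic third vector restores independence. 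To repair your argument you would need to show that a \emph{suitable} $B$ (or a suitable replacement for $S$) can always be chosen, which is essentially the missing content supplied by the paper's ``claim'' about $I_n,A_1,A_2$.
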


\begin{proof}
Without loss of generality we can suppose that ${\rm dim}(\mathcal{V}) = 4$.
Say $\mathcal{V} = {\rm span}\{I_n, A_1, A_2, A_3\}$ where the $A_i$ are
Hermitian and linearly independent.

We first claim that there is a projection $P$ of rank at most 3
such that $PI_nP$, $PA_1P$, and $PA_2P$ are linearly independent.
If $A_1$ and $A_2$ are jointly diagonalizable
then we can find three common eigenvectors
$v_1$, $v_2$, and $v_3$ such that the vectors $(1,1,1),
(\lambda_1, \lambda_2, \lambda_3), (\mu_1, \mu_2, \mu_3) \in \mathbb{C}^3$
are linearly independent, where $\lambda_i$ and $\mu_i$ are the
eigenvalues belonging to $v_i$ for $A_1$ and $A_2$, respectively. Then
the projection onto ${\rm span}\{v_1, v_2, v_3\}$ verifies the claim. If
$A_1$ and $A_2$ are not jointly diagonalizable, then we can find two
eigenvectors $v_1$ and $v_2$ of $A_1$ such that $\langle A_2v_1, v_2\rangle
\neq 0$. Letting $v_3$ be a third eigenvector of $A_1$ with the property
that the eigenvalues of $A_1$ belonging to $v_1$, $v_2$, and $v_3$ are not
all equal, we can again use the projection onto ${\rm span}\{v_1, v_2, v_3\}$.
This establishes the claim.

Now let $P$ be as in the claim and find $B \in M_n$ such that
$PI_nP$, $PA_1P$, $PA_2P$, and $PBP$ are linearly independent. By
Lemma \ref{threedim} we can then find a rank 2 projection $Q \leq P$
such that $QI_nQ$, $QA_1Q$, $QA_2Q$, and $QBQ$ are linearly
independent.

If $QI_nQ$, $QA_1Q$, $QA_2Q$, and $QA_3Q$ are linearly independent then
we are done. Otherwise, let $\alpha$, $\beta$, and $\gamma$ be the unique
scalars such that $QA_3Q = \alpha QI_nQ + \beta QA_1Q + \gamma QA_2Q$. By
Lemma \ref{rank2} we can find a rank 2 Hermitian matrix $C$ such that
${\rm Tr}(I_nC) = {\rm Tr}(A_1C) = {\rm Tr}(A_2C) = 0$
but ${\rm Tr}(A_3C) \neq 0$. Then $C = vv^* - ww^*$ for some
orthogonal vectors $v$ and $w$. Thus,
$\langle Av,v\rangle = \langle Aw,w\rangle$ for $A = I_n$, $A_1$, and
$A_2$, but not for $A = A_3$. It follows that the two conditions
$$\langle A_3v,v\rangle = \alpha\langle I_nv,v\rangle
+ \beta\langle A_1v,v\rangle + \gamma\langle A_2v,v\rangle$$
and
$$\langle A_3w,w\rangle = \alpha\langle I_nw,w\rangle
+ \beta\langle A_1w,w\rangle + \gamma\langle A_2w,w\rangle$$
cannot both hold. Without loss of generality suppose the first fails.
Then letting $Q'$ be the projection onto ${\rm span}({\rm ran}(Q) \cup \{v\})$,
we cannot have $Q'A_3Q' = \alpha Q'I_nQ' + \beta Q'A_1Q' + \gamma Q'A_2Q'$.
Thus ${\rm rank}(Q') = 3$ and ${\rm dim}(Q'\mathcal{V}Q') = 4$. The
theorem now follows by applying Lemma \ref{threedim} to
$Q'\mathcal{V}Q'$.
\end{proof}

Theorem \ref{2clique} does not generalize to arbitrary four-dimensional
subspaces of $M_n$. For instance, let
$\mathcal{V} = {\rm span}\{E_{11}, E_{12}, E_{13}, E_{14}\} \subset M_4$;
by reasoning similar to that in the proof of Proposition \ref{rowcolumn},
if $P$ is any rank $2$ projection in $M_4$ then ${\rm dim}(P\mathcal{V}P)
\leq 2$.

\section{The main theorem}

The proof of our main theorem proceeds through a series of lemmas.

\begin{lemma}\label{basic}
Suppose the operator system $\mathcal{V}$ is contained in $D_n$. If
${\rm dim}(\mathcal{V}) \geq k^2 + k - 1$ then $\mathcal{V}$ has a
quantum $k$-clique. If ${\rm dim}(\mathcal{V}) \leq \frac{n-k}{k-1}$
then $\mathcal{V}$ has a quantum $k$-anticlique. If $n \geq k^3 - k + 1$
then $\mathcal{V}$ has either a quantum $k$-clique or a quantum
$k$-anticlique.
\end{lemma}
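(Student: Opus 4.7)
The third statement follows at once from the other two: $n \geq k^{3}-k+1$ is equivalent to $(n-k)/(k-1)\geq k^{2}+k-1$, so either $\dim\mathcal{V}\geq k^{2}+k-1$ (and the first claim gives a $k$-clique) or $\dim\mathcal{V}\leq k^{2}+k-2\leq(n-k)/(k-1)$ (and the second gives a $k$-anticlique). The real work is in the first two parts.

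For the clique claim I would reduce to Proposition~\ref{diagonal} by coordinate restriction. Identify $\mathcal{V}\subseteq D_{n}$ with a subspace of $\mathbb{C}^{n}$ via the diagonal. The aim is to find $T\subseteq\{1,\dots,n\}$ of size $k^{2}+k-1$ such that the restriction $v\mapsto v|_{T}$ sends $\mathcal{V}$ onto $\mathbb{C}^{T}$; equivalently, $\mathcal{V}^{\perp}\cap\mathbb{C}^{T}=\{0\}$, where $\mathbb{C}^{T}\subseteq\mathbb{C}^{n}$ is the span of the standard basis vectors indexed by $T$. Such a $T$ is built greedily: whenever $|T|<k^{2}+k-1\leq\dim\mathcal{V}$, some $l\notin T$ can be adjoined, for otherwise every $e_{l}$ would lie in $\mathcal{V}^{\perp}+\mathbb{C}^{T}$ and hence $n\leq\dim\mathcal{V}^{\perp}+|T|=(n-\dim\mathcal{V})+|T|$, forcing $|T|\geq\dim\mathcal{V}$. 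Once $T$ is fixed, every diagonal matrix on $\mathbb{C}^{T}$ is of the form $A|_{T}$ for some $A\in\mathcal{V}$, so Proposition~\ref{diagonal} applied to the diagonal operator system on $\mathbb{C}^{T}$ (embedded inside $M_{n}$ via the inclusion $\mathbb{C}^{T}\hookrightarrow\mathbb{C}^{n}$) produces a rank-$k$ projection $P$ with $P\mathcal{V}P=M_{k}$, using that $PAP$ depends only on $A|_{T}$ for diagonal $A$.

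For the anticlique claim I would appeal to Tverberg's theorem. The hypothesis $\dim\mathcal{V}\leq(n-k)/(k-1)$ rewrites as $n\geq(k-1)(m+1)+1$ with $m=\dim\mathcal{V}$, which is exactly the Tverberg bound for partitioning $n$ points of $\mathbb{R}^{m}$ into $k$ classes with a common convex-hull point. Since $\mathcal{V}$ is closed under adjoints, it is the complexification of a real subspace $\mathcal{V}_{\mathbb{R}}\subseteq\mathbb{R}^{n}$ of real dimension $m$. Pick a real basis $v^{(1)}=(1,\dots,1),v^{(2)},\dots,v^{(m)}$ of $\mathcal{V}_{\mathbb{R}}$ and form the profile vectors $x_{l}=(v^{(1)}_{l},\dots,v^{(m)}_{l})\in\mathbb{R}^{m}$. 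Tverberg's theorem then produces disjoint nonempty subsets $S_{1},\dots,S_{k}\subseteq\{1,\dots,n\}$ and probability weights $\lambda_{i,\cdot}$ supported on $S_{i}$ such that $\sum_{l\in S_{i}}\lambda_{i,l}x_{l}$ equals a common point $z\in\mathbb{R}^{m}$ for every $i$. Set $f_{i}=\sum_{l\in S_{i}}\sqrt{\lambda_{i,l}}\,e_{l}$; these are orthonormal because the $S_{i}$ are disjoint, so $P=\sum_{i}f_{i}f_{i}^{*}$ is a rank-$k$ projection. A direct calculation gives $(P\,{\rm diag}(v)\,P)_{ii'}=\delta_{ii'}\sum_{l\in S_{i}}\lambda_{i,l}v_{l}$, and on the basis element $v=v^{(j)}$ this diagonal entry equals the $j$-th coordinate $z_{j}$ of $z$, independent of $i$. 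Hence $P\,{\rm diag}(v^{(j)})\,P=z_{j}I_{k}$, and by linearity $P\mathcal{V}P\subseteq\mathbb{C}\cdot P$, so $P$ is the required anticlique.

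The main obstacle is the anticlique claim: the key observation is that the bound $\dim\mathcal{V}\leq(n-k)/(k-1)$ matches the Tverberg number $(k-1)(m+1)+1$ exactly, after which everything is a translation between the combinatorial partition data and an explicit orthonormal frame. The clique claim is more routine, amounting to a rank/coordinate-restriction argument that feeds into the already-established Proposition~\ref{diagonal}.
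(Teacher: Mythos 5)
Your proof is correct and follows essentially the same route as the paper: pick a coordinate set on which the compression of $\mathcal{V}$ is the full diagonal algebra and feed it into Proposition~\ref{diagonal} for the clique, use Tverberg's theorem to produce the anticlique, and combine the two via the equivalence of $k^2+k-1 \leq \frac{n-k}{k-1}$ with $n \geq k^3-k+1$. The only difference is one of detail: you spell out the Tverberg partition-to-projection construction explicitly, whereas the paper delegates that step to Theorem 4 of \cite{KLV}.
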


\begin{proof}
If ${\rm dim}(\mathcal{V}) \geq k^2 + k - 1 = m$ then we can find a
set of indices $S \subseteq \{1, \ldots, n\}$ of cardinality $m$ such that
${\rm dim}(P\mathcal{V}P) = m$ where $P$ is the orthogonal projection onto
${\rm span}\{e_i: i \in S\}$. Then $P\mathcal{V}P \cong D_m \subseteq M_m
\cong PM_nP$ and Proposition \ref{diagonal} yields that $P\mathcal{V}P$,
and hence also $\mathcal{V}$, has a quantum $k$-clique. If
${\rm dim}(\mathcal{V}) \leq \frac{n-k}{k-1}$ then a result of Tverberg
\cite{T1, T2} can be used to extract a quantum $k$-anticlique; this is
essentially Theorem 4 of \cite{KLV}. Thus if $k^2 + k - 1 \leq
\frac{n-k}{k-1}$ then one of the two cases must obtain, i.e., $\mathcal{V}$
must have either a quantum $k$-clique or a quantum $k$-anticlique. A little
algebra shows that this inequality is equivalent to $n \geq k^3 - k + 1$.
\end{proof}

\begin{lemma}\label{normalize}
Let $v_1, \ldots, v_r$ be vectors in
$\mathbb{C}^s$. Then there are vectors $w_1, \ldots, w_r \in \mathbb{C}^{r-1}$
such that the vectors $v_i \oplus w_i \in \mathbb{C}^{s + r - 1}$ are
pairwise orthogonal and all have the same norm.
\end{lemma}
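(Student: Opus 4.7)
The plan is to translate the problem into finding a suitable Gram matrix. Writing $c = \|v_i \oplus w_i\|^2$ for the common squared norm, the pairwise orthogonality condition $\langle v_i \oplus w_i, v_j \oplus w_j\rangle = 0$ becomes $\langle w_i, w_j\rangle = -\langle v_i, v_j\rangle$ for $i \neq j$, and the common norm condition becomes $\|w_i\|^2 = c - \|v_i\|^2$. Thus if $G_v$ denotes the $r \times r$ Gram matrix of the $v_i$, the required Gram matrix of the $w_i$ is exactly $M = cI_r - G_v$.

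The task therefore reduces to two things: (i) choose the scalar $c$ so that $M = cI_r - G_v$ is positive semidefinite, and (ii) ensure that $M$ can be realized as the Gram matrix of $r$ vectors living in $\mathbb{C}^{r-1}$, which happens iff $M$ is PSD of rank at most $r-1$. The natural choice is to take $c$ equal to the largest eigenvalue of $G_v$. Then $cI_r - G_v \succeq 0$ automatically, and moreover $cI_r - G_v$ has at least one zero eigenvalue, so its rank is at most $r-1$.

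Once $M$ is in hand, a standard fact about PSD matrices provides the conclusion: any $r\times r$ positive semidefinite matrix of rank at most $r-1$ factors as $W^*W$ with $W$ an $(r-1)\times r$ matrix, and its columns are the desired vectors $w_1,\ldots,w_r \in \mathbb{C}^{r-1}$. By construction their Gram matrix is $M$, so the $v_i \oplus w_i$ are pairwise orthogonal and of common squared norm $c$.

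The only slightly delicate point, and the step I would flag as the main obstacle, is arranging the rank bound: an arbitrary PSD matrix that realizes the prescribed off-diagonal entries and positive diagonal could require $r$ dimensions, and the statement insists on only $r-1$. This is exactly what forces the choice $c = \lambda_{\max}(G_v)$ rather than an arbitrary large constant — it costs nothing in the off-diagonal structure but buys the missing dimension.
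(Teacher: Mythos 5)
Your proof is correct and is essentially the paper's own argument: the paper also sets the common squared norm equal to $\|G\|$ (the largest eigenvalue of the Gram matrix of the $v_i$), notes that $\|G\|I_r - G$ is positive semidefinite of rank at most $r-1$, and realizes it as the Gram matrix of vectors $w_i \in \mathbb{C}^{r-1}$. No gaps; your discussion of the rank bound matches the paper's reasoning exactly.
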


\begin{proof}
Let $G$ be the Gramian matrix of the vectors $v_i$ and let $\|G\|$ be
its operator norm. Then ${\rm rank}(\|G\|I_r - G) \leq r - 1$,
so we can find vectors $w_i \in \mathbb{C}^{r-1}$ whose Gramian matrix
is $\|G\|I_r - G$. The Gramian matrix of the vectors $v_i \oplus w_i$
is then $\|G\|I_r$, as desired.
\end{proof}

Then next lemma improves Proposition \ref{diagonal}.

\begin{lemma}\label{blocks}
Let $n = k^2 + k - 1$ and suppose
$A_1, \ldots, A_{k^2}$ are Hermitian matrices in $M_n$ such that
for each $i$ we have $\langle A_ie_i, e_i\rangle = 1$, and also
$\langle A_ie_r,e_s\rangle = 0$ whenever ${\rm max}\{r,s\} > i$.
Then $\mathcal{V} = {\rm span}\{I, A_1, \ldots, A_{k^2}\}$ has a quantum
$k$-clique.
\end{lemma}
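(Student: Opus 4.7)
The plan is to find a rank $k$ projection $P$ such that the compressions $PA_1P, \ldots, PA_{k^2}P$ are linearly independent in $PM_nP$; since that space has dimension $k^2$, this forces ${\rm dim}(P\mathcal{V}P) = k^2$. Identifying ${\rm ran}(P) \cong \mathbb{C}^k$ and writing $f_i := Pe_i \in \mathbb{C}^k$, each $PA_iP$ corresponds to $B_i := \sum_{r, s \leq i} A_i(r,s) f_r f_s^* \in M_k^h$. I will produce vectors $f_1, \ldots, f_{k^2} \in \mathbb{C}^k$ spanning $\mathbb{C}^k$ whose associated $B_i$ are linearly independent; then setting $G := \sum_i f_i f_i^*$ (invertible), $f_i' := G^{-1/2} f_i$ for $i \leq k^2$ and $f_i' := 0$ for $i > k^2$, and defining $V : \mathbb{C}^n \to \mathbb{C}^k$ by $Ve_i = f_i'$ gives $VV^* = I_k$, so $P := V^*V$ is a rank $k$ projection on $\mathbb{C}^n$, and a direct computation yields $VA_iV^* = G^{-1/2} B_i G^{-1/2}$, which has the same linear independence as $B_i$.

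The $f_i$ are constructed inductively. Set $h_i := f_i + \sum_{r < i} A_i(r, i) f_r$; then once $f_1, \ldots, f_{i-1}$ are fixed, choosing $f_i \in \mathbb{C}^k$ is equivalent to choosing $h_i \in \mathbb{C}^k$. Using the Hermiticity of $A_i$, a short calculation gives
$$B_i = h_i h_i^* + D_i,$$
where $D_i \in M_k^h$ depends only on $f_1, \ldots, f_{i-1}$. Write $W_{i-1} := {\rm span}_{\mathbb{R}}\{B_1, \ldots, B_{i-1}\}$ and assume inductively that this has real dimension $i - 1$. The goal is to choose $h_i$ with $B_i \notin W_{i-1}$, equivalently $h_i h_i^* \notin W_{i-1} - D_i$, an affine subspace of $M_k^h$ of real dimension $i - 1$, which is proper in $M_k^h$ (dimension $k^2$) for every $i \leq k^2$.

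The crucial geometric fact is that $\{hh^* : h \in \mathbb{C}^k\}$ contains $0$ and spans $M_k^h$ over $\mathbb{R}$ (any Hermitian matrix is a real combination of rank $1$ positive semidefinites via the spectral theorem), so it is not contained in any proper affine subspace of $M_k^h$. Hence a suitable $h_i$ exists at each step, and the induction runs through $i = k^2$ to give $W_{k^2} = M_k^h$. This also forces the $f_i$ to span $\mathbb{C}^k$: if they lay in a proper subspace $U \subsetneq \mathbb{C}^k$, every $B_i$ would have range in $U$ and kernel containing $U^\perp$, and $W_{k^2}$ would be trapped in a proper subspace of $M_k^h$. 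The main obstacle is precisely this geometric step; the frame rescaling and the identity $VA_iV^* = G^{-1/2}B_iG^{-1/2}$ are routine.
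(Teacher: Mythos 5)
Your proof is correct. Its core --- the inductive choice of the vectors via completing the square, writing $B_i = D_i + h_ih_i^*$ with $h_i$ freely choosable once $f_1,\ldots,f_{i-1}$ are fixed, and observing that $\{hh^* : h \in \mathbb{C}^k\}$ contains $0$ and spans $M_k^h$, hence cannot sit inside the proper affine subspace $W_{i-1}-D_i$ --- is exactly the paper's argument (there written as $A_i' = B' + \tilde{u}\tilde{u}^*$ with $\tilde{u}$ arbitrary). Where you genuinely differ is in how the vectors $f_1,\ldots,f_{k^2} \in \mathbb{C}^k$ are realized as compressions under a rank $k$ projection of $M_n$. The paper uses Lemma \ref{normalize}: it extends the vectors to pairwise orthogonal, equal-norm vectors $v_i \oplus w_i$ using the $k^2-1$ auxiliary coordinates, normalizes to an orthonormal set, and after a change of basis the compressions are the scalar multiples $\frac{1}{N^2}A_i'$. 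You instead use a Parseval-frame normalization: $f_i' = G^{-1/2}f_i$ (and $f_i' = 0$ for $i > k^2$) gives a coisometry $V$ with $VV^* = I_k$, so $P = V^*V$ has rank $k$ and $VA_iV^* = G^{-1/2}B_iG^{-1/2}$, a congruence that preserves independence. This substitute needs no auxiliary dimensions at all, so it would prove the lemma already for any $n \geq k^2$, showing the extra $k-1$ coordinates in $n = k^2+k-1$ are not needed for this statement (the paper keeps them because Lemma \ref{normalize} is how it converts arbitrary vectors into an orthonormal system, at the modest price of a larger $n$ and the mild aesthetic gain that the compressed matrices are literal rescalings of the $A_i'$ rather than congruent copies). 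Two points you leave implicit are fine but worth being aware of: real-linear independence of the Hermitian matrices $B_i$ implies complex-linear independence, so $W_{k^2} = M_k^h$ does give $\dim_{\mathbb{C}}(V\mathcal{V}V^*) = k^2$; and your spanning argument guaranteeing that $G$ is invertible is correct, since if all $f_i$ lay in a proper subspace $U$ then every $B_i$ would be a Hermitian matrix with range in $U$, and these form a proper real subspace of $M_k^h$.
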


\begin{proof}
Let $A_i$ have matrix entries $(a^i_{rs})$. The
goal is to find vectors $v_1, \ldots, v_{k^2} \in \mathbb{C}^k$ such that
the matrices
$$A_i' = \sum_{1 \leq r,s \leq k^2} a^i_{rs}v_rv_s^* \in M_k$$
are linearly independent. Once we have done this, find vectors
$w_i \in {\bf C}^{k^2 - 1}$ as in Lemma \ref{normalize} and let
$f_i = \frac{1}{N}(v_i \oplus w_i) \in \mathbb{C}^n \cong \mathbb{C}^k
\oplus \mathbb{C}^{k^2 - 1}$ where $N$ is the
common norm of the $v_i \oplus w_i$. Then the $f_i$ form an orthonormal
set in $\mathbb{C}^n$, so they can be extended to an orthonormal basis,
and the operators whose matrices for this basis are the $A_i$ compress to
the matrices $\frac{1}{N^2}A_i'$ on the initial $\mathbb{C}^k$, which
are linearly independent. So $P\mathcal{V}P$ contains $k^2$ linearly
independent matrices, where $P$ is the orthogonal projection onto
$\mathbb{C}^k$, showing that $\mathcal{V}$ has a quantum $k$-clique.
\medskip

The vectors $v_i$ are constructed inductively. Once $v_1, \ldots, v_i$ are
chosen so that $A_1', \ldots, A_i'$ are independent, future choices
of the $v$'s cannot change this since $A_1, \ldots, A_i$ all live on the
initial $i\times i$ block. We can let $v_1$ be any nonzero vector in
$\mathbb{C}^k$, since $A_1 = e_1e_1^*$, so that $A_1' = v_1v_1^*$ and this
only has to be nonzero. Now suppose $v_1, \ldots, v_{i-1}$ have been chosen
and we need to select $v_i$ so that $A_i'$ is independent of
$A_1', \ldots, A_{i-1}'$. After choosing $v_i$ we will have
$A_i' = \sum_{1 \leq r,s \leq i} a^i_{rs}v_rv_s^*$.
Let $B$ be this sum restricted to $1 \leq r,s \leq i-1$. That part is
already determined since $v_i$ does not appear. Also let
$$u = a^i_{1i}v_1 + \cdots + a^i_{(i-1)i}v_{i-1};$$
then we will have
$$A_i' = B + uv_i^* + v_iu^* + v_iv_i^*$$
(using the assumption that $a^i_{ii} = 1$). That is,
$$A_i' = (B - uu^*) + (u + v_i)(u + v_i)^* = B' + \tilde{u}\tilde{u}^*$$
where $\tilde{u} = u + v_i$ is arbitrary,
and the question is whether $\tilde{u}$ can be chosen to make this matrix
independent of $A_1', \ldots, A_{i-1}'$. But the possible choices of
$A_i'$ span $M_k$ --- there is no matrix which is Hilbert-Schmidt
orthogonal to $B' + \tilde{u}\tilde{u}^*$ for all $\tilde{u}$ --- so there
must be a choice of $\tilde{u}$ which makes $A_i'$ independent of
$A_1', \ldots, A_{i-1}'$, as desired.
\end{proof}

Next we prove a technical variation on Lemma \ref{blocks}.

\begin{lemma}\label{blocks2}
Let $n = k^4 + k^3 + k - 1$ and let $\mathcal{V}$ be an operator system
contained in $M_n$. Suppose $\mathcal{V}$ contains matrices
$A_1, \ldots, A_{k^4 + k^3}$ such that for each $i$ we have
$\langle A_ie_i, e_{i+1}\rangle \neq 0$, and also
$\langle A_ie_r, e_s\rangle = 0$ whenever ${\rm max}\{r,s\} > i+1$ and
$r \neq s$. Then $\mathcal{V}$ has a quantum $k$-clique.
\end{lemma}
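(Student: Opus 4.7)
My plan is to mirror the strategy of Lemma \ref{blocks}. The aim is to construct vectors $v_1, \ldots, v_{k^4+k^3} \in \mathbb{C}^k$ so that the compressions
$$A_i' = \sum_{r,s} (A_i)_{r,s}\, v_r v_s^* \in M_k$$
together span $M_k$. Given this, Lemma \ref{normalize} produces $w_i \in \mathbb{C}^{k^4+k^3-1}$ such that the vectors $v_i \oplus w_i \in \mathbb{C}^n = \mathbb{C}^k \oplus \mathbb{C}^{k^4+k^3-1}$ are pairwise orthogonal of common norm $N$. Normalizing, extending to an orthonormal basis, and passing via unitary invariance to a copy of $\mathcal{V}$ whose relevant elements compress under the projection $P$ onto the initial $\mathbb{C}^k$ summand to $N^{-2}A_i'$, one gets $\dim(P\mathcal{V}P) = k^2$, so $P$ is the desired quantum $k$-clique.

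Two features make the construction more delicate than in Lemma \ref{blocks}. First, the off-diagonal support of $A_i$ extends to an $(i+1)\times(i+1)$ block rather than $i\times i$, so $A_i'$ depends on $v_{i+1}$ as well as on $v_1,\ldots,v_i$. Second --- and more seriously --- the diagonal entries of $A_i$ are unconstrained, so $A_i'$ picks up contributions $(A_i)_{rr}\, v_r v_r^*$ from every index $r$, including far into the ``future'' of any inductive step. The clean Lemma \ref{blocks} induction, in which $v_i$ only influences $A_j'$ for $j \geq i$, therefore does not literally apply.

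I would address this by running an inductive selection of the $v_i$'s in which the guaranteed-nonzero off-diagonal entry $(A_{i-1})_{i,i-1}$ plays the structural role that the normalized diagonal entry $(A_i)_{ii}=1$ played in Lemma \ref{blocks}: after completing the square in $v_i$, its contribution to $A_{i-1}'$ is a rank-one perturbation whose direction sweeps the entire rank-one cone in $M_k$ as $v_i$ varies, so $v_i$ can be chosen to keep $A_{i-1}'$ outside any proper subspace determined by the previously secured matrices. To absorb the tail diagonal contributions, I would constrain the $v_r$'s for $r$ beyond some threshold $R$ to lie in a low-dimensional subspace of $\mathbb{C}^k$, so that every tail sum $\sum_{r > R} (A_j)_{rr}\, v_r v_r^*$ is confined to a fixed low-dimensional subspace of $M_k$ and cannot collapse a previously established $k^2$-dimensional span. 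The inflated matrix count $k^4+k^3$, versus the $k^2$ strictly required, is precisely the slack this bookkeeping needs. The main obstacle is making it tight: maintaining, through the induction, the invariant that the secured compressions stay linearly independent under the controlled perturbations coming from future diagonal entries.
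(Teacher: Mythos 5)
Your overall frame (choose heads $v_r\in\mathbb{C}^k$, orthogonalize via Lemma \ref{normalize}, compress) is the right spirit, but the device you propose for the real difficulty --- the unconstrained diagonal entries --- does not work, and this is exactly where the paper's proof does something different. You propose to confine the $v_r$ with $r$ beyond a threshold to a low-dimensional subspace of $\mathbb{C}^k$, so that the tail sums $\sum_{r>R}(A_j)_{rr}v_rv_r^*$ lie in a fixed low-dimensional subspace $S\subseteq M_k$ and therefore ``cannot collapse a previously established $k^2$-dimensional span.'' That invariant is false: if matrices $X_1,\dots,X_{k^2}$ span $M_k$ and each is perturbed by an element of a $d$-dimensional subspace $S$, the span of the perturbed family can drop to dimension $k^2-d$ (only the images in $M_k/S$ are preserved). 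Since the span you need is all of $M_k$, even a one-dimensional perturbation space can destroy it, and because the diagonal entries $(A_i)_{rr}$ are arbitrary and run over \emph{all} indices up to $n$, every compression $A_i'$ keeps being perturbed by every later choice --- there is no stage at which anything is safely ``established.'' Having extra matrices ($k^4+k^3$ versus $k^2$) does not help, because you cannot hold more than $k^2$ independent matrices in reserve inside $M_k$. There is also a quieter counting problem: to control the compression you must prescribe the $\mathbb{C}^k$-component of essentially every basis vector carrying a nonzero diagonal entry, i.e.\ up to $n$ heads, and Lemma \ref{normalize} then needs ambient dimension about $k+n-1>n$; forcing the surplus heads to be zero instead runs into the same shortfall of $k$ dimensions. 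So the direct mimicry of Lemma \ref{blocks} does not fit without a further idea.

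The missing idea in the paper is a dichotomy on the diagonal tails themselves. For $r>i+1$ the compression of $A_i$ to ${\rm span}\{e_r,\dots,e_n\}$ is diagonal; call the vector of those diagonal entries the $r$-tail. Either some $k^2+k-1$ of the $A_i$ have linearly independent tails --- in which case compressing $\mathcal{V}$ to the far coordinates gives an operator system inside a diagonal algebra of dimension at least $k^2+k-1$, and the first assertion of Lemma \ref{basic} already yields a quantum $k$-clique --- or every $k^2+k-1$ of them have dependent tails. In the latter case one works in consecutive blocks of $k^2+k-1$ matrices: a suitable linear combination $B_i'$ kills the tail entirely, and the top surviving coefficient guarantees a nonzero entry inside the block, hence a unit vector $v_i$ supported in that block with $\langle B_i'v_i,v_i\rangle\neq 0$. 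The resulting Hermitian matrices $B_i$ are genuinely supported on disjoint finite blocks, so after compressing to ${\rm span}\{v_1,\dots,v_{k^2},e_{k^4+k^3+1},\dots,e_{k^4+k^3+k-1}\}$ they satisfy the hypotheses of Lemma \ref{blocks}, which is what the block budget $k^2(k^2+k)=k^4+k^3$ is for. Your proposal never cancels the tails, only tries to corral them, and that is the gap.
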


\begin{proof}
Let $A_i$ have matrix entries $(a^i_{rs})$.
Observe that for each $i$ the compression of $A_i$ to
${\rm span}\{e_{i+2}, \ldots, e_n\}$ is diagonal. For each $r > i+1$
let the {\it $r$-tail} of $A_i$ be the vector $(a^i_{rr}, \ldots, a^i_{nn})$.
Suppose there exist indices $i_1, \ldots, i_{k^2 + k - 1}$ such that
the $r$-tails of the $A_{i_j}$, $1 \leq j \leq k^2 + k - 1$, are
linearly independent, where $r = {\rm max}_j\{i_j + 2\}$. Then the compression
of $\mathcal{V}$ to ${\rm span}\{e_r, \ldots, e_n\}$ contains $k^2 + k - 1$
linearly independent diagonal matrices, so it has a quantum $k$-clique by
the first assertion of Lemma \ref{basic}. Thus, we may assume that for
any $k^2 + k - 1$ distinct indices $i_j$ the matrices $A_{i_j}$ have
linearly dependent $r$-tails.

We construct an orthonormal sequence of vectors $v_i$ and a sequence of
Hermitian matrices $B_i \in \mathcal{V}$, $1 \leq i \leq k^2$, such that
the compressions of the $B_i$ to ${\rm span}\{v_1, \ldots, v_{k^2},
e_{k^4 + k^3 + 1}, \ldots, e_{k^4 + k^3 + k - 1}\}$ satisfy the hypotheses
of Lemma \ref{blocks}. This will ensure the existence of a quantum
$k$-clique.

The first $k^2 + k - 1$ matrices $A_1, \ldots, A_{k^2 + k - 1}$ have
linearly dependent $r$-tails for $r = k^2 + k + 1$. Thus there is a
nontrivial linear combination
$B'_1 = \sum_{i=1}^{k^2 + k - 1} \alpha_i A_i$ whose $r$-tail is the
zero vector. Letting $j$ be the largest index such that $\alpha_j$ is
nonzero, we have $\langle B'_1e_j, e_{j+1}\rangle \neq 0$ because
$\langle A_je_j,e_{j+1}\rangle \neq 0$ but
$\langle A_ie_j,e_{j+1}\rangle = 0$ for $i < j$. Thus the compression
of $B'_1$ to ${\rm span}\{e_1, \ldots, e_{k^2 + k}\}$ is nonzero, so
there exists a unit vector $v_1$ in this span such that
$\langle B'_1v_1, v_1\rangle \neq 0$. Then let $B_1$ be a scalar
multiple of either the real or imaginary part of $B_1'$ which satisfies
$\langle B_1v_1,v_1\rangle = 1$. Note that $\langle B_1e_r,e_s\rangle = 0$
for any $r,s$ with ${\rm max}\{r,s\} > k^2 + k$. Apply the same reasoning
to the next block of $k^2 + k - 1$ matrices
$A_{k^2 + k + 1}, \ldots, A_{2k^2 + 2k - 1}$ to find $v_2$ and $B_2$,
and proceed inductively. After $k^2$ steps, $k^2(k^2 + k) = k^4 + k^3$
indices will have been used up and $k - 1$ (namely, $e_{k^4 + k^3 + 1},
\ldots, e_{k^4 + k^3 + k-1}$) will remain, as needed.
\end{proof}

\begin{theo}\label{main}
Every operator system in $M_{8k^{11}}$
has either a quantum $k$-clique or a quantum $k$-anticlique.
\end{theo}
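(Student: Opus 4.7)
My plan is to aim for the hypothesis of Lemma \ref{blocks2} via an inductive construction inside $\mathcal{V}$, falling back on Lemma \ref{basic} whenever the construction cannot be continued.

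Setting $n = 8k^{11}$ and $N = k^4 + k^3 + k - 1$, I attempt to build an orthonormal sequence $e_1, \ldots, e_N \in \mathbb{C}^n$ together with Hermitian matrices $A_1, \ldots, A_{k^4 + k^3} \in \mathcal{V}$ satisfying the hypotheses of Lemma \ref{blocks2}. Writing $F_i = {\rm span}\{e_1, \ldots, e_i\}$, the requirement is that each $A_i$ have a nonzero $(i, i+1)$ entry and act diagonally on $F_{i+1}^\perp$ in the orthonormal basis being built there, with vanishing off-diagonal coupling between $F_{i+1}$ and $F_{i+1}^\perp$. If the construction completes, Lemma \ref{blocks2} immediately yields a quantum $k$-clique.

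At stage $i$ I maintain an orthonormal basis of $F_i^\perp$ that simultaneously diagonalises every previous compression $A_j|_{F_i^\perp}$ with $j < i$. To extend, I look inside a large joint eigenspace $W \subseteq F_i^\perp$ of this commuting family for a matrix $A_i \in \mathcal{V}$ whose compression to $W$ is not already diagonal in the inherited basis; any such $A_i$ lets me choose $e_{i+1} \in W$ with nonzero coupling $\langle A_i e_i, e_{i+1}\rangle$ and then refine the basis of $F_{i+1}^\perp$ to simultaneously diagonalise $A_i$ as well. If no such $A_i$ exists, then $P_W \mathcal{V} P_W$ embeds into the diagonal algebra $D_{\dim W}$ relative to the inherited basis of $W$, and Lemma \ref{basic} applied to this embedded subsystem returns a quantum $k$-clique or a quantum $k$-anticlique (in $P_W \mathcal{V} P_W$, hence in $\mathcal{V}$) as soon as $\dim W \geq k^3 - k + 1$.

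The main obstacle, and the reason for the bound $8k^{11}$, is the quantitative bookkeeping. Each successful stage consumes a direction for $e_{i+1}$ and may partition $F_i^\perp$ further into joint eigenspaces of the commuting family, so the largest common eigenspace shrinks multiplicatively across stages. I need this largest eigenspace to exceed $k^3 - k + 1$ at every potentially failed stage (for the fallback), while also sustaining up to $k^4 + k^3$ successful stages before Lemma \ref{blocks2} kicks in. Tracking the worst-case split at each stage, combined with the one-direction-per-stage cost and the Tverberg threshold needed by Lemma \ref{basic}, produces a polynomial bound in $k$ which, with generous slack, becomes $n = 8k^{11}$. The most delicate point is verifying that the basis-refinement at each stage can always be performed inside $W$ without invalidating the structural conditions already established on $e_1, \ldots, e_i$ and $A_1, \ldots, A_{i-1}$.
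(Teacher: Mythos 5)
Your plan has two genuine gaps, and together they sink the argument. First, the inductive maintenance step is not available: after choosing $A_i$, you propose to ``refine the basis of $F_{i+1}^\perp$ to simultaneously diagonalise $A_i$ as well,'' but the compression of $A_i$ to $F_{i+1}^\perp$ need not commute with the previously diagonalised compressions, i.e.\ it need not preserve their joint eigenspaces. A basis change confined to those joint eigenspaces can diagonalise $A_i$ only inside a single eigenspace $W$; it cannot remove the entries of $A_i$ joining distinct eigenspaces, nor the entries joining the already-fixed vectors $e_1,\ldots,e_{i+1}$ to $F_{i+1}^\perp$ --- and those cross entries $\langle A_ie_r,e_s\rangle$ with $r\le i+1<s$ (or with $r,s>i+1$ in different eigenspaces) are exactly what Lemma \ref{blocks2} requires to vanish. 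Relatedly, non-diagonality of $P_WA_iP_W$ does not produce a unit vector $e_{i+1}\in W$ with $\langle A_ie_i,e_{i+1}\rangle\neq 0$: the nonzero coupling must be to the specific, already-chosen vector $e_i$, which your selection criterion does not control. Second, the quantitative fallback fails: joint eigenspaces do not shrink by a bounded multiplicative factor per stage. A single Hermitian matrix with simple spectrum shatters $F_{i+1}^\perp$ into one-dimensional eigenspaces at the very first step, after which the largest common eigenspace has dimension $1<k^3-k+1$, the fallback to Lemma \ref{basic} is unavailable, and no further $A_i$ with non-diagonal compression can exist. So no polynomial bound (indeed no bound at all) comes out of this bookkeeping.

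The paper's proof runs on a different dichotomy, which is the missing idea. One first tries to choose unit vectors $v_r$ with $v_r\perp\mathcal{V}v_j$ for all $j<r$; this kills \emph{all} off-diagonal entries of \emph{all} matrices in $\mathcal{V}$ simultaneously, so if it lasts $k^3$ steps the compression lies in $D_{k^3}$ and Lemma \ref{basic} finishes. If it fails at some stage, one obtains a subspace $F$ on which $\dim(\mathcal{V}'v)\ge 8k^8$ for \emph{every} unit vector $v$, and it is this richness --- not any diagonalisation --- that permits the chain construction $w_{r+1}=A_rw_r$ with $w_{r+1}$ chosen orthogonal to all $w_j$, $A_iw_j$, $A_i^*w_j$ ($i<r$, $j\le r$): these orthogonality conditions give $\langle A_iw_i,w_{i+1}\rangle\neq 0$ and the vanishing of all off-diagonal entries beyond column $i+1$, while leaving the diagonal tails arbitrary, which is precisely the slack Lemma \ref{blocks2} was designed to absorb. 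Without some such dimension-counting argument guaranteeing that the next vector can be chosen orthogonal to the $O(r^2)$ constraint vectors, the hypotheses of Lemma \ref{blocks2} cannot be met, and your construction as described does not reach them.
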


\begin{proof}
Set $n = 8k^{11}$ and let $\mathcal{V}$ be an operator system in $M_n$.
Find a unit vector $v_1 \in \mathbb{C}^n$, if one exists, such that
the dimension of $\mathcal{V}v_1 = \{Av_1: A \in \mathcal{V}\}$
is less than $8k^8$. Then find a unit vector $v_2 \in (\mathcal{V}v_1)^\perp$,
if one exists, such that the dimension of
$(\mathcal{V}v_1)^\perp \cap (\mathcal{V}v_2)$ is less than $8k^8$.
Proceed in this fashion, at the $r$th step trying to find a unit vector
$v_r$ in
$$(\mathcal{V}v_1)^\perp \cap \cdots \cap (\mathcal{V}v_{r-1})^\perp$$
such that the dimension of
$$(\mathcal{V}v_1)^\perp \cap \cdots \cap (\mathcal{V}v_{r-1})^\perp
\cap (\mathcal{V}v_r)$$
is less than $8k^8$. If this construction lasts for $k^3$
steps then the compression of $\mathcal{V}$ to
${\rm span}\{v_1, \ldots, v_{k^3}\} \cong M_{k^3}$ is contained in $D_{k^3}$,
so this compression, and hence also $\mathcal{V}$, has either a quantum
$k$-clique or a quantum $k$-anticlique by Lemma \ref{basic}.

Otherwise, the construction fails at some stage $d$. This means that
the compression $\mathcal{V}'$ of $\mathcal{V}$ to
$F = (\mathcal{V}v_1)^\perp \cap \cdots \cap (\mathcal{V}v_d)^\perp$ has
the property that the dimension of $\mathcal{V}'v$ is at least $8k^8$,
for every unit vector $v \in F$.

Work in $F$.
Choose any nonzero vector $w_1 \in F$ and find $A_1 \in \mathcal{V}'$ such
that $w_2 = A_1w_1$ is nonzero and orthogonal to $w_1$. Then find
$A_2 \in \mathcal{V}'$ such that $w_3 = A_2w_2$ is nonzero and orthogonal
to ${\rm span}\{w_1, w_2, A_1w_1, A_1^*w_1, A_1w_2, A_1^*w_2\}$. Continue
in this way, at the $r$th step finding $A_r \in \mathcal{V}'$ such that
$w_{r+1} = A_rw_r$ is nonzero and orthogonal to
${\rm span}\{w_j, A_iw_j, A_i^*w_j: i < r$ and $j \leq r\}$. The dimension of
this span is at most $2r^2 - r$, so as long as $r \leq 2k^4$ its dimension
is less than $8k^8$ and a vector $w_{r+1}$ can be found. Compressing to
the span of the $w_i$ then puts us in the situation of Lemma \ref{blocks2}
with $n = 2k^4$, which is more than enough. So there exists a quantum
$k$-clique by that lemma.
\end{proof}

The constants in the proof could easily be improved, but only marginally. Very
likely the problem of determining optimal bounds on quantum Ramsey numbers is
open-ended, just as in the classical case.

\section{A generalization}

In this section we will present a result which simultaneously generalizes the
classical and quantum Ramsey theorems. This is less interesting than it sounds
because the proof involves little more than a reduction to these two special
cases. Perhaps the statement of the theorem is more significant than its proof.

At the beginning of Section 2 we showed how any simple graph $G$ on the vertex
set $\{1, \ldots, n\}$ gives rise to an operator system $\mathcal{V}_G
\subseteq M_n$. This operator system has the special property that it is a
bimodule over $D_n$, i.e., it is stable under left and right multiplication
by diagonal matrices. Conversely, it is not hard to see that any operator
system in $M_n$ which is also a $D_n$-$D_n$-bimodule must have the form
$\mathcal{V}_G$ for some $G$ (\cite{W}, Propositions 2.2 and 2.5). The
general definition therefore goes as follows:

\begin{defi}\label{qgdef}
(\cite{W}, Definition 2.6 (d))
Let $\mathcal{M}$ be a unital $*$-subalgebra of $M_n$. A {\it quantum graph
on $\mathcal{M}$} is an operator system $\mathcal{V} \subseteq M_n$ which
satisfies $\mathcal{M}'\mathcal{V}\mathcal{M}' = \mathcal{V}$.
\end{defi}

Here $\mathcal{M}' = \{A \in M_n: AB = BA$ for all $B \in \mathcal{M}\}$ is
the commutant of $\mathcal{M}$. This definition is actually
representation-independent: if $\mathcal{M}$ and $\mathcal{N}$ are
$*$-isomorphic unital $*$-subalgebras of two matrix algebras (possibly
of different sizes), then the quantum graphs on $\mathcal{M}$ naturally
correspond to the quantum graphs on $\mathcal{N}$ (\cite{W}, Theorem 2.7).
More properly, one could say that the pair $(\mathcal{M}, \mathcal{V})$
is the quantum graph, just as a classical graph is a pair
$(V, \mathcal{E})$.

If $\mathcal{M} = M_n$ then its commutant
is $\mathbb{C}\cdot I_n$ and the bimodule condition in Definition
\ref{qgdef} is vacuous: any
operator system in $M_n$ is a quantum graph on $M_n$. On the other hand,
the commutant of $\mathcal{M} = D_n$ is itself, so that by the comment
made above, the quantum graphs
on $D_n$ --- the operator systems which are $D_n$-$D_n$-bimodules ---
correspond to simple graphs on the vertex set $\{1, \ldots, n\}$.
In this correspondence,
subsets of $\{1, \ldots, n\}$ give rise to orthogonal projections
$P \in D_n$, and the $k$-cliques and $k$-anticliques of the graph are
realized in the matrix picture as rank $k$ orthogonal projections $P \in D_n$
which satisfy $P\mathcal{V}_GP = PM_nP$ or $PD_nP$, respectively. This
suggests the following definition.

\begin{defi}
Let $\mathcal{M}$ be a unital $*$-subalgebra of $M_n$ and let
$\mathcal{V} \subseteq M_n$ be a quantum graph on $\mathcal{M}$. A
rank $k$ projection $P \in \mathcal{M}$ is a {\it quantum $k$-clique} if
it satisfies $P\mathcal{V}P = PM_nP$ and a {\it quantum $k$-anticlique} if
it satisfies $P\mathcal{V}P = P\mathcal{M}'P$.
\end{defi}

Since every operator system contains the identity matrix, if $\mathcal{V}$
is a quantum graph on $\mathcal{M}$ then $\mathcal{M}' \subseteq \mathcal{V}$.
So $P\mathcal{V}P = P\mathcal{M}'P$ is the minimal possibility, as
$P\mathcal{V}P = PM_nP$ is the maximal possibility. Note the crucial
requirement that $P$ must belong to $\mathcal{M}$.

If $\mathcal{M} = M_n$ then $\mathcal{M}' = \mathbb{C}\cdot I_n$ and
the preceding definition duplicates the notions of quantum $k$-clique and
quantum $k$-anticlique used earlier in the paper, whereas if
$\mathcal{M} = D_n$ it effectively reproduces the classical notions of
$k$-clique and $k$-anticlique in a finite simple graph. In the classical
setting fewer operator systems count as graphs, but one also has less
freedom in the choice of $P$ when seeking cliques or anticliques.

We require only the following simple lemma.

\begin{lemma}
Let $\mathcal{V} \subseteq M_{nd} \cong M_n \otimes M_d$ be a quantum
graph on $M_n\otimes I_d$. If $nd \geq 8k^{11}$ then there is a projection
in $M_n \otimes I_d$ whose rank is at least $k$, and which is either a
quantum clique or a quantum anticlique of $\mathcal{V}$.
\end{lemma}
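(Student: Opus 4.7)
The plan is to reduce this to the main theorem, Theorem \ref{main}, applied to an auxiliary operator system in $M_n$. The crucial structural step is to observe that because $\mathcal{V}$ is stable under left and right multiplication by the commutant $(M_n \otimes I_d)' = I_n \otimes M_d$, it must factor as $\mathcal{V} = \mathcal{W} \otimes M_d$ for some operator system $\mathcal{W} \subseteq M_n$. Writing each $X \in \mathcal{V}$ as $\sum_{i,j} X_{ij} \otimes E_{ij}$ with $E_{ij}$ the matrix units of $M_d$, the compressions $(I_n \otimes E_{ii}) X (I_n \otimes E_{jj}) = X_{ij} \otimes E_{ij}$ all lie in $\mathcal{V}$, and the bimodule property then promotes $E_{ij}$ to any matrix in $M_d$. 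Taking $\mathcal{W} = \{B \in M_n : B \otimes E_{11} \in \mathcal{V}\}$, the facts $I_{nd} \in \mathcal{V}$ and $\mathcal{V}^* = \mathcal{V}$ make $\mathcal{W}$ an operator system, and the decomposition $\mathcal{V} = \mathcal{W} \otimes M_d$ follows.

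Every projection in $M_n \otimes I_d$ has the form $P = Q \otimes I_d$ for a projection $Q \in M_n$, and $\mathrm{rank}(P) = d \cdot \mathrm{rank}(Q)$. A direct tensor calculation yields
$$P\mathcal{V}P = (Q\mathcal{W}Q) \otimes M_d, \quad PM_{nd}P = (QM_nQ) \otimes M_d, \quad P(I_n \otimes M_d)P = \mathbb{C}Q \otimes M_d,$$
so $P$ is a quantum clique of $\mathcal{V}$ precisely when $Q\mathcal{W}Q = QM_nQ$ (i.e.\ $Q$ is a quantum clique of $\mathcal{W}$), and $P$ is a quantum anticlique of $\mathcal{V}$ precisely when $Q\mathcal{W}Q = \mathbb{C}Q$ (i.e.\ $Q$ is a quantum anticlique of $\mathcal{W}$). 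The problem is thus reduced to producing a quantum $m$-clique or $m$-anticlique of $\mathcal{W}$ in $M_n$ with $md \geq k$.

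Set $m = \lceil k/d \rceil$, so $md \geq k$. If $d \geq k$ then $m = 1$ and any rank-$1$ projection $Q \in M_n$ is automatically a quantum $1$-anticlique of $\mathcal{W}$, whence $P = Q \otimes I_d$ is a quantum anticlique of rank $d \geq k$. Otherwise $d < k$, and Theorem \ref{main} applied to $\mathcal{W}$ in $M_n$ produces a quantum $m$-clique or $m$-anticlique $Q$; lifting to $P = Q \otimes I_d$ gives a projection of rank $md \geq k$ of the desired type. The application of Theorem \ref{main} requires $n \geq 8m^{11}$, which, given $nd \geq 8k^{11}$, is equivalent to the numerical inequality $k^{11} \geq d \lceil k/d \rceil^{11}$. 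This follows from $\lceil k/d \rceil \leq k/d + 1$ by a short case analysis on $d$.

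The main obstacle is the structural identification $\mathcal{V} = \mathcal{W} \otimes M_d$ and the corresponding bijection between cliques/anticliques on the two sides; once those are in hand the reduction to Theorem \ref{main} is bookkeeping, and the closing numerical inequality is elementary.
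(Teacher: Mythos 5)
Your proposal is correct and follows essentially the same route as the paper: decompose $\mathcal{V} = \mathcal{W}\otimes M_d$ using the bimodule property, dispose of $d \geq k$ with a rank-one $Q$, and otherwise apply Theorem \ref{main} to $\mathcal{W}$ with $m = \lceil k/d\rceil$, lifting $Q$ to $Q\otimes I_d$ after checking the same numerical inequality $n \geq 8\lceil k/d\rceil^{11}$. The only differences are cosmetic: you spell out the tensor decomposition and the clique/anticlique correspondence (which the paper asserts), and you compress the elementary inequality check (which the paper spells out).
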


\begin{proof}
Since $\mathcal{V}$ is a bimodule over $(M_n\otimes I_d)' = I_n \otimes M_d$,
it has the form $\mathcal{V} = \mathcal{W}\otimes M_d$ for some operator
system $\mathcal{W} \subseteq M_n$. If $d = 1$ then the desired result was
proven in Theorem \ref{main}, and if $d \geq k$ then any projection of the
form $P\otimes M_d$, where $P$ is a rank 1 projection in $M_n$, will have
rank at least $k$ and be both a quantum clique
and a quantum anticlique. So assume $2 \leq d < k$.

Now if $d \geq 3$ then $d^{10/11} > 2$, so $d < k(d^{10/11} - 1)$. Thus
$1 < \frac{k}{d}(d^{10/11} - 1)$, i.e., $\frac{k}{d} + 1 < \frac{k}{d}\cdot
d^{10/11} = \frac{k}{d^{1/11}}$, which implies $(\frac{k}{d} + 1)^{11}
< \frac{k^{11}}{d}$. So finally
$$n \geq \frac{8k^{11}}{d} > 8\left(\frac{k}{d} + 1\right)^{11}
> 8\left\lceil\frac{k}{d}\right\rceil^{11}.$$
If $d = 2$ then $d < 3(d^{10/11} - 1)$, so the same reasoning leads
to the same inequality $n \geq 8\lceil\frac{k}{d}\rceil^{11}$ provided
$k \geq 3$, and the inequality is immediate
when $k = d = 2$. So we conclude that in all cases $n \geq
8\lceil\frac{k}{d}\rceil^{11}$. By Theorem \ref{main}, $\mathcal{W}$ has
a quantum $\lceil\frac{k}{d}\rceil$-clique or a quantum 
$\lceil\frac{k}{d}\rceil$-anticlique $Q \in M_n$. Then $Q\otimes I_d$ is
correspondingly either a quantum $\lceil\frac{k}{d}\rceil\cdot d$-clique or
a quantum  $\lceil\frac{k}{d}\rceil\cdot d$-anticlique of $\mathcal{V}$,
which is enough.
\end{proof}

Note that we cannot promise a quantum $k$-clique or -anticlique, only
a $\geq k$-clique or -anticlique, since the rank of any projection in
$M_n\otimes I_d$ is a multiple of $d$.

\begin{theo}\label{general}
For every $k$ there exists $n$ such that if $\mathcal{M}$ is a unital
$*$-subalgebra of $M_n$ and $\mathcal{V} \subseteq M_n$ is an operator
system satisfying $\mathcal{M}'\mathcal{V}\mathcal{M}' = \mathcal{V}$,
then there is a projection $P \in \mathcal{M}$ whose rank is at least
$k$ and such that $P\mathcal{V}P = PM_nP$ or $P\mathcal{M}'P$.
\end{theo}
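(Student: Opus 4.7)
The approach is to combine the Artin--Wedderburn decomposition of $\mathcal{M}$ with the preceding lemma and the classical Ramsey theorem. After conjugating by a unitary, I would write $\mathcal{M} = \bigoplus_{i=1}^r M_{n_i}\otimes I_{d_i}$ acting on $\bigoplus_i(\mathbb{C}^{n_i}\otimes\mathbb{C}^{d_i})$, with commutant $\mathcal{M}' = \bigoplus_i I_{n_i}\otimes M_{d_i}$, and let $P_i\in\mathcal{M}\cap\mathcal{M}'$ be the central projection onto the $i$-th summand. Because $\mathcal{V}$ is an $\mathcal{M}'$-bimodule, it splits as $\mathcal{V} = \bigoplus_{i,j} P_i\mathcal{V}P_j$, and a standard argument (using that $M_{d_i,d_j}$ is simple as an $M_{d_i}$-$M_{d_j}$-bimodule) shows that $P_i\mathcal{V}P_j = \mathcal{W}_{ij}\otimes M_{d_i,d_j}$ for some linear subspace $\mathcal{W}_{ij}\subseteq M_{n_i,n_j}$, with $\mathcal{W}_{ii}$ an operator system.

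Set $b = 8k^{11}$ and take $n \geq b\cdot R(k,k)$, where $R(k,k)$ is the classical Ramsey number. If some block satisfies $n_{i_0}d_{i_0}\geq b$, then $P_{i_0}\mathcal{V}P_{i_0}$ is a quantum graph on $M_{n_{i_0}}\otimes I_{d_{i_0}}$ of large enough dimension to apply the preceding lemma; this yields a projection $Q\in M_{n_{i_0}}\otimes I_{d_{i_0}}\subseteq\mathcal{M}$ of rank $\geq k$ which is a quantum clique or anticlique of $P_{i_0}\mathcal{V}P_{i_0}$, and the inequality $Q\leq P_{i_0}$ together with the block-diagonality of $\mathcal{M}'$ transfers the property verbatim to $\mathcal{V}$. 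Otherwise every block is small, so $r\geq n/b\geq R(k,k)$, and I extract a monochromatic clique $S$ of size $k$ in the complete graph on $\{1,\ldots,r\}$ with the edge $\{i,j\}$ coloured red if $\mathcal{W}_{ij}\neq 0$ and blue otherwise. Picking rank-one projections $Q_i = v_iv_i^*\in M_{n_i}$ for $i\in S$ and setting $P = \sum_{i\in S} Q_i\otimes I_{d_i}\in\mathcal{M}$ produces a projection of rank $\sum_{i\in S}d_i\geq k$. Each diagonal compression $(Q_i\otimes I_{d_i})\mathcal{V}(Q_i\otimes I_{d_i})$ automatically equals $Q_i\otimes M_{d_i}$, which coincides with both $(Q_i\otimes I_{d_i})M_n(Q_i\otimes I_{d_i})$ and $(Q_i\otimes I_{d_i})\mathcal{M}'(Q_i\otimes I_{d_i})$ since $Q_i$ has rank one. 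For $i\neq j$ in $S$, the off-diagonal compression equals $(v_iv_i^*\mathcal{W}_{ij}v_jv_j^*)\otimes M_{d_i,d_j}$, which is $0$ in the blue case (yielding $P\mathcal{V}P = P\mathcal{M}'P$, a quantum anticlique), and for a generic choice of the $v_i$'s satisfying the finitely many non-vanishing conditions $v_i^*Wv_j\neq 0$ for some $W\in\mathcal{W}_{ij}$, equals the full block $(Q_i\otimes I_{d_i})M_n(Q_j\otimes I_{d_j})$ in the red case (yielding $P\mathcal{V}P = PM_nP$, a quantum clique).

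The only non-trivial structural input beyond the preceding lemma and classical Ramsey is the bimodule decomposition $P_i\mathcal{V}P_j = \mathcal{W}_{ij}\otimes M_{d_i,d_j}$, which is standard and implicit in the quantum-relations framework of \cite{W}; this is the mildest form of obstacle here. The remaining bookkeeping challenge is essentially to observe that the rank-one choices in the many-blocks case collapse every diagonal block simultaneously to both the maximal and minimal possibilities, so that the clique-versus-anticlique dichotomy is entirely controlled by the off-diagonal Ramsey colouring, and that genericity suffices to realise all the off-diagonal compressions as full in the red case.
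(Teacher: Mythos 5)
Your proposal is correct and follows essentially the same route as the paper: Wedderburn-decompose $\mathcal{M}$, use pigeonhole to find either a single block of size at least $8k^{11}$ (where the preceding lemma applies and the conclusion transfers since $Q \leq P_{i_0}$) or at least $R(k,k)$ blocks, and in the latter case compress by rank-one projections and invoke the classical Ramsey theorem. The only cosmetic difference is that you colour edges according to whether $\mathcal{W}_{ij} \neq 0$ and then need a genericity argument to make the red-case compressions nonvanishing, whereas the paper compresses by arbitrary rank-one projections in every block first and applies classical Ramsey to whatever graph results, so no genericity is required.
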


\begin{proof}
Let $R(k,k)$ be the classical Ramsey number and set $n = 8k^{11}\cdot R(k,k)$.
Now $\mathcal{M}$ has the form $(M_{n_1}\otimes I_{d_1})\oplus
\cdots \oplus (M_{n_r}\otimes I_{d_r})$ for some pair of sequences
$(n_1, \ldots, n_r)$ and $(d_1, \ldots, d_r)$ such that $n_1d_1 + \cdots
+ n_rd_r = n$. Thus if $r \leq R(k,k)$ then for some $i$ we must have
$n_id_i \geq 8k^{11}$, and compressing to that block then yields the
desired conclusion by appealing to the lemma. Otherwise, if $r > R(k,k)$,
then choose a sequence of rank 1 projections $Q_i \in M_{n_i}$ and
work in $QM_nQ$ where $Q = (Q_1 \otimes I_{d_1}) \oplus \cdots \oplus
(Q_r\otimes I_{d_r})$. Then $QM_nQ \cong M_{d_1 + \cdots + d_r}$,
$Q\mathcal{M}Q \cong \mathbb{C}\cdot I_{d_1} \oplus \cdots \oplus
\mathbb{C}\cdot I_{d_r} \cong D_r$, and $Q\mathcal{V}Q$
is a bimodule over the commutant of $Q\mathcal{M}Q$ in $QM_nQ$, i.e., the
$*$-algebra $M_{d_1} \oplus \cdots \oplus M_{d_r}$. It follows that there
is a graph $G = (V,\mathcal{E})$ on the vertex set $V = \{1, \ldots, r\}$ such that
$Q\mathcal{V}Q$ has the form
$$Q\mathcal{V}Q = \sum E_{ij}\otimes M_{d_id_j},$$
taking the sum over the set of pairs $\{(i,j): i = j$ or
$\{i,j\} \in \mathcal{E}\}$ (\cite{W}, Theorem 2.7).
Since $r > R(k,k)$, there exists either a $k$-clique or a $k$-anticlique
in $G$, and this gives rise to a diagonal projection in $Q\mathcal{M}Q$
whose rank is at least $k$ and which is either a quantum $k$-clique or a
quantum $k$-anticlique of $\mathcal{V}$.
\end{proof}

Again, when $\mathcal{M} = M_n$ Theorem \ref{general} recovers the quantum
Ramsey theorem and when $\mathcal{M} = D_n$ it recovers the classical
Ramsey theorem (though in both cases with worse constants).

Theorem \ref{general} could also be proven by mimicking the proof of
Theorem \ref{main}. However, in order to accomodate the requirement that
$P$ belong to $\mathcal{M}$ we need to modify the last part of the proof
so as to be sure that each $w_r$ belongs to $\mathcal{W}w_1 \cap \cdots
\cap \mathcal{W}w_{r-1}$. This means that instead of needing
$\mathcal{W}v$ to have sufficiently large dimension for each $v$, we
need it to have sufficiently small codimension. Ensuring that this
must be the case if the construction in the first part of the proof fails
then requires that construction to take place in a space whose dimension
is exponential in $k$. This explains the dramatic difference between
classical and quantum Ramsey numbers (the first grows exponentially, the
second polynomially).


\end{document}